\title{Entropy flows and functional inequalities in convex sets}
\date{}
\author{Simon Zugmeyer}
\affil{Univ Lyon, Université Claude Bernard Lyon 1, CNRS UMR 5208, Institut Camille Jordan, 43 blvd. du 11 novembre 1918, F-69622 Villeurbanne cedex, France. \texttt{\href{mailto:zugmeyer@math.univ-lyon1.fr}{zugmeyer@math.univ-lyon1.fr}}}
\newcommand{\titlemath}[2]{\texorpdfstring{\boldmath#1}{#2}}
\newtheorem{thrm}{Theorem}[section]
\newtheorem{prop}[thrm]{Proposition}
\newtheorem{coro}[thrm]{Corollary}
\newtheorem{lemm}[thrm]{Lemma}
\theoremstyle{definition}
\newtheorem{defi}[thrm]{Definition}
\theoremstyle{remark}
\newtheorem{rmrk}[thrm]{Remark}
\newtheorem{hypo}{Hypothesis}
\newcommand{\R}{\mathbb{R}}
\newcommand{\N}{\mathbb{N}}
\newcommand{\vphi}{\varphi}
\newcommand{\eps}{\varepsilon}
\newcommand{\mc}{\mathcal}
\DeclareMathOperator{\Ent}{Ent}
\DeclareMathOperator{\supp}{supp}
\DeclareMathOperator{\dive}{div}
\DeclarePairedDelimiter{\norm}{\lVert}{\rVert} 
\DeclarePairedDelimiter{\abs}{\lvert}{\rvert}
\DeclarePairedDelimiter{\sbra}{[}{]}
\DeclarePairedDelimiter{\pare}{(}{)}
\let\oldabs\abs
\def\abs{\@ifstar{\oldabs}{\oldabs*}}
\let\oldnorm\norm
\def\norm{\@ifstar{\oldnorm}{\oldnorm*}}
\let\oldsbra\sbra
\def\sbra{\@ifstar{\oldsbra}{\oldsbra*}}
\let\oldpare\pare
\def\pare{\@ifstar{\oldpare}{\oldpare*}}
\begin{document}

\maketitle

\begin{abstract}
  We revisit entropy methods to prove new sharp trace logarithmic Sobolev and sharp Gagliardo-Nirenberg-Sobolev inequalities on the half space, with a focus on the entropy inequality itself and not the actual flow, allowing for somewhat robust and self-contained proofs. 
  
  \noindent\textbf{Keywords:} Entropy methods, trace inequalities, logarithmic Sobolev inequality
\end{abstract}

\section{Introduction}\label{se:intro}

\subsection{Brief introduction of the ideas}

Sobolev inequalities have proved an important tool in the study of partial differential equations, notably in establishing existence results. More recently, they have been very fruitfully used in the study of the long term behavior of certain equations. For instance, the logarithmic Sobolev inequality can be used to establish a rate of convergence of the heat flow towards its mean on torus, or towards the self-similar profile on the Euclidean space.

These ideas and results fall in the general context of entropy methods, of which Ansgar Jüngel's book~\cite{jue_book} offers a nice overview. Boltzmann defined his entropy in 1872 by \(\mc \Ent(u)=\int u\log(u)dx\) for, say, positive functions defined on \(\R^d\). Now, if \(u\) is the solution of the heat equation, \(\partial_t u=\Delta u\), differentiating the entropy yields
\[
\frac{d}{dt}\Ent(u)=\int(1+\log(u))\Delta u dx = -\int \frac{\norm{\nabla u}^2}{u}dx,
\]
which implies two important things: the first one is that the entropy is nonincreasing along the flow, conveying the idea that the physical transformation described by \(u\) is irreversible. The second one, maybe more profound, is that the logarithmic Sobolev inequality is exactly an equality relating the entropy and its derivative, which, after integration, implies exponential decay of the entropy with respect to time. This relationship between the heat equation and the entropy is no coincidence. It turns out that the space of probability measures, when equipped with the Wasserstein distance, can be formally seen as a Riemannian manifold, and in that setting, the heat equation is exactly the gradient flow of Boltzmann's entropy. This approach was initially developed in the seminal papers by Felix Otto et al.~\cite{jko98,ov00,otto01}, and the study of gradient flows on metric spaces has since been made rigorous in~\cite{ags_book}. See also Filippo Santambrogio's survey on this topic~\cite{santambrogio_survey}. 

Somewhat recently, Manuel del Pino and Jean Dolbeault observed a similar result for the Gagliardo-Nirenberg-Sobolev (GNS) inequalities \cite{dpd99,dpd02}. They may be rewritten in a way that involves both an entropy functional, different than Boltzmann's, and its derivative along a particular mass-preserving flow.
\begin{thrm}[{\cite[Corollary 13]{dpd02}}]\label{th:dpd_gns}
  Let \(d\geq 3\), and \(1<p<\frac{d}{d-2}\). Then, the sharp Gagliardo-Nirenberg-Sobolev inequality
  \begin{equation}\label{eq:dpd_gns}
    \forall w\in\mc C^\infty_c(\R^d),\,\norm{w}_{2p}\leq C \norm{\nabla w}_2^\theta \norm{w}_{1+p}^{1-\theta},
  \end{equation}
  where \(\theta\in\sbra{0,1}\) is fixed by the parameters, is equivalent to
  \begin{equation}\label{eq:dpd_entropy}
    \forall u\in\mc C^\infty_c(\R^d) \text{ s.t.}\  \norm u_1=\norm v_1,\, \mc F(u)-\mc F(v) \leq \frac{1}{2}\int_{\R^d}u\norm{x-\nabla(u^{\alpha-1})}^2dx,
  \end{equation}
  where \(\alpha=(p+1)/(2p)<1\), and
  \[
  \mc F(u)=\int_{\R^d} \sbra{-\frac{u^\alpha}{\alpha} + u\pare{1+\frac{\norm{x}}{2}^2}}dx \quad \text{and}\quad  v(x)=\pare{1+\frac{\norm{x}}{2}^2}^{1/(\alpha-1)}.
  \]
\end{thrm}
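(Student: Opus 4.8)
The plan is to translate \eqref{eq:dpd_entropy} into an inequality between exactly the three quantities of \eqref{eq:dpd_gns} by the substitution $u=w^{2p}$ (i.e.\ $w=u^{1/(2p)}$). This choice is forced by the algebra: $2p\alpha=p+1$ gives $u^{\alpha}=w^{p+1}$ and $u^{\alpha-1}=w^{1-p}$, and $\norm u_1=\norm w_{2p}^{2p}$, so the mass constraint $\norm u_1=\norm v_1=:M$ becomes the normalization $\norm w_{2p}^{2p}=M$. First I would unfold the right-hand side of \eqref{eq:dpd_entropy}: expand $\norm{x-\nabla(u^{\alpha-1})}^2$ into three pieces; integrate the cross term by parts, using $u\,\nabla(u^{\alpha-1})=\tfrac{\alpha-1}{\alpha}\nabla(u^{\alpha})$ together with $\dive x=d$, so that $\int u\langle x,\nabla(u^{\alpha-1})\rangle$ is a multiple of $\int u^{\alpha}=\int w^{p+1}$; and note that the last piece is an honest gradient square, $\int u\norm{\nabla(u^{\alpha-1})}^2=(p-1)^2\norm{\nabla w}_2^2$, since $\nabla(u^{\alpha-1})=(1-p)w^{-p}\nabla w$. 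The decisive feature is that the confining potential in $\mc F$ is matched to the drift field $x$ in the dissipation term (its gradient being precisely $x$), so that all contributions quadratic in $x$ cancel identically, leaving the exact identity, valid for every nonnegative $u$,
\begin{equation*}
  \mc F(u)-\mc F(v)-\tfrac12\int_{\R^d} u\norm{x-\nabla(u^{\alpha-1})}^2\,dx=\norm w_{2p}^{2p}-C_1\norm w_{1+p}^{1+p}-C_2\norm{\nabla w}_2^2-\mc F(v),
\end{equation*}
with $C_2=\tfrac{(p-1)^2}{2}$ and an explicit $C_1=\tfrac1\alpha-\tfrac{(p-1)d}{p+1}$, both positive precisely when $1<p<\tfrac d{d-2}$ (which also makes $\mc F(v)$ finite). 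Since the three norms in \eqref{eq:dpd_gns} are unchanged under $w\mapsto\abs w$, it suffices to treat $w\geq0$, for which $u=w^{2p}$ is legitimate; hence \eqref{eq:dpd_entropy} is \emph{equivalent} to the reduced inequality
\begin{equation}\label{eq:reduced}
  \norm w_{2p}^{2p}\leq C_1\norm w_{1+p}^{1+p}+C_2\norm{\nabla w}_2^2+\mc F(v)\qquad\text{for all }w\geq0\text{ with }\norm w_{2p}^{2p}=M.
\end{equation}

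Next I would show that \eqref{eq:reduced} is equivalent to the sharp, scale-invariant \eqref{eq:dpd_gns}. The key is that \eqref{eq:reduced}, being quantified over the full slice $\{\norm w_{2p}^{2p}=M\}$, secretly carries an optimization: applying it to the mass-preserving rescalings $w_\lambda(x)=\lambda^{d/(2p)}w(\lambda x)$, $\lambda>0$, which keep $\norm{w_\lambda}_{2p}^{2p}=M$ while scaling $\norm{w_\lambda}_{1+p}^{1+p}=\lambda^{e_1}\norm w_{1+p}^{1+p}$ and $\norm{\nabla w_\lambda}_2^2=\lambda^{e_2}\norm{\nabla w}_2^2$ with $e_1=\tfrac{d(1-p)}{2p}<0<e_2=2-\tfrac{d(p-1)}{p}$ (positivity of $e_2$ being again $p<\tfrac d{d-2}$), turns \eqref{eq:reduced} into $M-\mc F(v)\leq C_1\lambda^{e_1}\norm w_{1+p}^{1+p}+C_2\lambda^{e_2}\norm{\nabla w}_2^2$ for all $\lambda>0$; quantifying over $\lambda$ amounts to minimizing the right-hand side, and for $e_1<0<e_2$ the minimum of $P\lambda^{e_1}+Q\lambda^{e_2}$ is an elementary explicit constant times $P^{e_2/(e_2-e_1)}Q^{-e_1/(e_2-e_1)}$. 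The resulting bound is homogeneous of the right degrees; homogenizing also in the amplitude of $w$ (to drop the normalization $\norm w_{2p}^{2p}=M$) makes it exactly \eqref{eq:dpd_gns}, the exponent emerging as the $\theta$ fixed by scaling and the constant $C$ emerging explicitly from $C_1,C_2,M,\mc F(v),d,p$. Every step is reversible, so this is a genuine equivalence; and since \eqref{eq:dpd_entropy} is saturated (with both sides vanishing) at $u=v$ because $\nabla(v^{\alpha-1})=x$, chasing equality through the chain pins down the extremals of \eqref{eq:dpd_gns} as the dilations, translations and scalar multiples of $v^{1/(2p)}$, and shows that the constant $C$ so produced is sharp.

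The main obstacle is bookkeeping rather than ideas: one must carry out the expansion and the integrations by parts attentively enough to witness the cancellation of the $x$-weighted terms and to pin down $C_1,C_2,e_1,e_2$, and then track exponents through the rescaling to confirm that $\theta$ matches the scaling value built into \eqref{eq:dpd_gns} and that equality survives every step. The remaining point is purely technical regularity: \eqref{eq:dpd_entropy} is stated for $u\in\mc C^\infty_c(\R^d)$, whereas $w^{2p}$ with $w\in\mc C^\infty_c$ is in general only $\mc C^1$ near the zeros of $w$, and $v^{1/(2p)}$ is not compactly supported; both inequalities must therefore first be extended by density and truncation to their natural function spaces --- $W^{1,2}\cap L^{1+p}\cap L^{2p}$ for \eqref{eq:dpd_gns}, and densities of finite mass, entropy $\mc F$ and dissipation for \eqref{eq:dpd_entropy} --- after which all the manipulations above are justified.
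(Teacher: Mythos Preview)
The paper does not contain its own proof of this statement: Theorem~\ref{th:dpd_gns} is quoted from \cite[Corollary~13]{dpd02} as background in the introduction, and the paper only \emph{uses} it later (in Section~\ref{se:equiv}) to pass from the entropy inequality to the GNS family. So there is nothing to compare your proposal against directly.

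That said, your sketch is the standard route and matches closely what the paper carries out in the model example of Subsection~\ref{se:model} for the limiting Sobolev case $\alpha=1-1/d$: expand the square in the dissipation, integrate the cross term by parts via $u\nabla(u^{\alpha-1})=\tfrac{\alpha-1}{\alpha}\nabla(u^{\alpha})$, observe that the $\norm{x}^2$-weighted terms coming from the potential part of $\mc F$ cancel exactly against $\int u\norm{x}^2$, and reduce to an inequality among $\norm{w}_{2p}$, $\norm{w}_{1+p}$, $\norm{\nabla w}_2$ with $w=u^{1/(2p)}$; then recover the scale-invariant form by optimizing over the mass-preserving dilations $w_\lambda$. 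The algebra you outline (in particular $u\norm{\nabla(u^{\alpha-1})}^2=(p-1)^2\norm{\nabla w}^2$, the sign conditions $e_1<0<e_2$ under $1<p<\tfrac{d}{d-2}$, and the identification of extremals through $\nabla(v^{\alpha-1})=x$) is correct. Your caveats about regularity and density are the right ones and are exactly the sort of technical cleanup del Pino--Dolbeault handle; the paper itself sidesteps them here since it is only quoting the result.
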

Again, the right-hand side in inequality \eqref{eq:dpd_entropy} is exactly the derivative of the functional \(\mc F\) along a flow obtained, depending on the value of \(\alpha\), from the porous medium equation or the fast-diffusion equation through a change of variables. The resulting exponential decay of the entropy \(\mc F\) may be finally used to prove decay in various \(L^p\) norms using a general Csisz\'ar-Kullback-Pinsker inequality \cite{amtu01}. Note even though \(\mc F\) is not Boltzmann's original entropy, we still call it entropy, and we will use that word again throughout the paper without a rigorous definition. Loosely speaking, entropies will be Lyapunov functional related to specific flows, but the important thing is that they are only a tool. Many examples of entropies used to prove long-time behavior of solutions to certain equations exist in the literature, see for example~\cite{ct00,cv03}, and the references in~\cite{jue_book}.

Logarithmic Sobolev and Gagliardo-Nirenberg-Sobolev inequalities are not the only ones that may be seen as an inequality between the entropy and its derivative along a flow. Indeed, Poincaré inequalities, and the so-called Beckner inequalities, interpolating inequalities between the Poincaré inequality and the logarithmic Sobolev inequality, are all natural examples of this \cite{GZ19}. This motivates the study of the so-called generalized Sobolev inequalities.

\bigskip

While these generalized Sobolev inequalities may be used to study a particular flow, a striking fact is that they turn out to be contained in the flow itself~\cite{cjmtu01,jue_book}. Indeed, in the good tradition of the Bakry-Émery method \cite{be85}, differentiating the entropy along the flow twice instead of once, and then invoking geometric properties of the underlying space (the Bochner-Lichnerowicz inequality, or a curvature-dimension condition) as well as the convexity inherent of the entropy functional, allows to recover said generalized Sobolev inequality. This method was succesfully used by Toscani for the logarithmic Sobolev inequality \cite{tos97}, and subsequently quite thoroughly investigated in \cite{cjmtu01}. While optimal transport was used for a short proof of the Sobolev inequality in~\cite{cnv04}, flows have the advantage of being easily generalized to manifolds, as Demange has cleverly done in~\cite{dem08}.

In the case of linear flows, the existence of Markov semigroups makes the study simpler~\cite{bgl_book}, but in the general case, one has to resort to use tools from the realm of partial differential equations, possibly making the study quite convoluted. In this article, we wish to revisit the method for general entropies of the form
\begin{equation}\label{eq:entropy_ex}
  \mc F(u)=\int H(u)+uV,
\end{equation}
where \(H\) is a convex function on \(\R_+\), and \(V\) a strictly uniformly convex function on some subdomain of \(\R^d\).
\begin{rmrk}
  In equation \eqref{eq:entropy_ex} much like in the rest of the article, the integration against the standard Lebesgue measure, or the \((d-1)\)-dimensional Hausdorff measure when integrating on boundaries, will always be implied.
\end{rmrk}
We try for the proofs to be as self-contained as possible, while also keeping the calculations to a minimum. This is possible since we do not want to study the long-term behavior of the various flows considered, and are only interested in proving generalized Sobolev inequalities. Furthermore, we will, starting in section \ref{se:heart}, use the same vocabulary that is used in \cite{bgl_book}, i.e.\ we will make use of the \emph{Carré du champ} operator \(\Gamma\) and its iterated version, \(\Gamma_2\). This choice is motivated by two reasons: the first one is because the main results are very similar in nature with ones involving Markov semigroups.
The second one is because it makes calculations systematic, and also makes the curvature-dimension hypotheses appear clearly, allowing for easy generalization of all the results to manifolds.


\subsection{Model example: the Euclidean Sobolev inequality}\label{se:model}

Let us showcase the method with the study of a simple example which will serve as a guide in the next sections: the proof of the sharp Sobolev inequality on \(\R^d\). Define the functions \(H\in C^\infty(\R_+^*,\R)\) and \(v\in C^\infty(\R^d,\R_+^*)\) by
\begin{equation}\label{eq:fd_fun}
  H(x)=-x^{1-1/d}, \quad\text{and}\quad v(x)=C(1+\norm x^2)^{-d},
\end{equation}
where \(C>0\) has been chosen so that \(-H'(v)=1+\norm{x}^2\). For smooth positive functions \(u\), we define the entropy
\begin{equation}\label{eq:entropydef}
  \mc F(u) = \int_{\R^d} H(u) -u H'(v).
\end{equation}
Note that, since \(H\) is convex, \(\mc F(u)\geq \mc F(v)\).
Now choose a function \(u_0\in C^\infty(\R^d,\R_+^*)\) such that \(\int_{\R^d} u_0=\int_{\R^d}v \), and consider the relative entropy
\[
\mc F(u\mid v) = \mc F(u) -\mc F(v)
\]
along the flow
\begin{equation}\label{eq:gradflow}
  \begin{aligned}
    &\partial_tu=-\nabla\cdot\big(u\nabla(H'(v)-H'(u))\big) && \quad\text{in}\  \R_+^*\times\R^d, \\
    &u(0,.)=u_0 &&\quad \text{in}\  \R^d.
  \end{aligned}
\end{equation}
The first derivative of the entropy is easily calculated using an integration by parts:
\begin{align*}
  \frac{d}{dt}\mc F(u)&=\int_{\R^d} \partial_tu(H'(u)-H'(v))\\
  &=\int_{\R^d} \nabla\cdot(u\nabla(H'(v)-H'(u)))(H'(v)-H'(u))\\
  &=-\int_{\R^d} u\norm{\nabla(H'(v)-H'(u))}^2 \leq 0.
\end{align*}
Note that the flow \eqref{eq:gradflow} is the gradient flow of the entropy functional \eqref{eq:entropydef}. The fact that the derivative of the entropy takes such a nice form is a general fact of gradient flows \cite{santambrogio_survey}. The calculations for the second derivative are slightly tricky, so we refer to the next section for the full details, but using both the fact that \(\nabla^2H'(v)=-2 I_d\) and that \(\norm{\nabla^2\phi}_{HS}^2\geq \frac{1}{d}(\Delta\phi)^2\), we find that 
\begin{equation}\label{eq:entropy_ineq1} 
  \frac{d^2}{dt^2}\mc F(u) \geq -4 \frac{d}{dt}\mc F(u),
\end{equation}
which, if one recalls that the first derivative is nonpositive, proves that the entropy along the flow has a strong convexity property which is really the core of the argument.
Assuming that the function \(u\) converges, when \(t\) goes to infinity, to the stationnary solution \(v\), it is quite clear that \(\lim_{t\to+\infty}\mc F(u)=\mc F(v)\), and \(\lim_{t\to+\infty}\frac{d}{dt}\mc F(u)=0\). Now, integrating the second derivative of the entropy between \(0\) and \(+\infty\) leads to
\begin{equation}\label{eq:entropy_entropy_prod}
  \mc F(u_0) - \lim_{t\to+\infty}\mc F(u)=\mc F(u_0\mid v)\leq - \frac{1}{4} \left.\frac{d}{dt}\mc F(u)\right\vert_{t=0}.
\end{equation}
Equation \eqref{eq:entropy_entropy_prod} is a special case of an entropy - entropy production inequality, to which we will come back later. It is quite obviously optimal, since equality happens for \(u_0=v\).

We may now rewrite equation \eqref{eq:entropy_entropy_prod} with the explicit quantities \eqref{eq:fd_fun} to prove the sharp Sobolev inequality on \(\R^d\): since \(-H'(v)=1+\norm x^2\),
\begin{equation}
  \int_{\R^d} H(u_0)-H(v)-(u_0-v)H'(v)\leq \frac{1}{4}\int_{\R^d} u_0\norm{\nabla H'(u_0)+2x}^2.
\end{equation}
Expanding the right-hand side, we have to deal with three different terms. First, notice that
\[
\frac{1}{4}\int_{\R^d} u_0\norm{\nabla H'(u_0)}^2 = \frac{(d-1)^2(d-2)^2}{16d^2}\int_{\R^d} \norm{\nabla u_0^{1/2-1/d}}^2
\]
Next, the other square is
\[
\int_{\R^d} u_0\norm x^2 = \int_{\R^d} u_0(-H'(v)-1),
\]
which simplifies with the left-hand side. Finally, the double product can be integrated by parts once we notice, once again by homogeneity, that \(u_0\nabla H'(u_0)=-\frac{1}{d}\nabla H(u_0)\):
\[
\int_{\R^d} u_0\nabla H'(u_0)\cdot x = -\frac{1}{d}\int_{\R^d} \nabla H(u_0)\cdot x = \int_{\R^d} H(u_0),
\]
and this also simplifies with the left-hand side. Since \(\int u_0=\int v\), the equation we are left with is
\[
C \leq \int_{\R^d} \norm{\nabla u_0^{1/2-1/d}}^2
\]
for some explicit positive constant \(C\). Replacing \(u_0\) with \(f=u_0^{1/2-1/d}\), we recover Sobolev's inequality.


\subsection{Statement of the results}

In this subsection, we state the main results of this paper. Let us start by listing the hypotheses.

Let \(H\) be a strictly convex function from \(\R_+\) to \(\R\) such that \(H(0)=0\), and such that it is smooth on \(\R_+^*\). 
On \(\R^*_+\), define the functions \(\psi=H'\), \(U(x)=xH'(x)-H(x)\) and \(U_2(x)=xU'(x)-U(x)\). In everything that follows, we shall do the following hypothesis:
\begin{hypo}\label{hp:U}
 Assume that \( U_2+\frac{1}{d}U\geq 0.\)
\end{hypo}

Fix a closed convex set \(\overline\Omega\subset\R^d\), and choose a positive integrable function \(v\in C^\infty(\overline\Omega)\) such that
\begin{hypo}\label{hp:v}
  \(-\nabla^2\psi(v) \geq CI_d\) for some constant \(C>0\).  
\end{hypo}

\begin{thrm}\label{th:gen_sob}
  Let \(H\) be a strictly convex function from \(\R_+\) to \(\R\), such that \(H\) is smooth on \(\R^*_+\) and \(H(0)=0\), and define \(\psi=H'\). Fix a closed convex set \(\overline\Omega\subset\R^d\), and a smooth positive function \(v:\overline\Omega\to\R^*_+\). Under hypotheses \ref{hp:U} and \ref{hp:v}, for any positive function \(u\in \mc C^\infty(\overline\Omega)\) such that \(\int_\Omega u =\int_\Omega v\), the following inequality holds
  \begin{equation}\label{eq:gen_sob}
    \int_\Omega H(u)-H(v)-(u-v)\psi(v) \leq \frac{1}{2C}\int_\Omega u\norm{\nabla (\psi(u)-\psi(v))}^2.
  \end{equation}
\end{thrm}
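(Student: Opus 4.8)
The plan is to mimic the model example from Section \ref{se:model}, but now carried out for a general entropy \(H\) and on the convex domain \(\overline\Omega\), replacing the explicit fast-diffusion structure there by the abstract Hypotheses \ref{hp:U} and \ref{hp:v}. Concretely, I would introduce the relative entropy
\[
  \mc F(u\mid v) = \int_\Omega H(u)-H(v)-(u-v)\psi(v),
\]
which is nonnegative by strict convexity of \(H\), and the gradient flow
\[
  \partial_t u = \nabla\cdot\big(u\nabla(\psi(u)-\psi(v))\big)
\]
on \(\R_+^*\times\Omega\), with Neumann-type boundary condition \(u\,\partial_n(\psi(u)-\psi(v))=0\) on \(\partial\Omega\) ensuring mass conservation, and initial datum \(u_0=u\). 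As in the model case, the first derivative is, after one integration by parts,
\[
  \frac{d}{dt}\mc F(u) = -\int_\Omega u\,\norm{\nabla(\psi(u)-\psi(v))}^2 =: -\mc I(u),
\]
so \(\mc F\) is nonincreasing. The target inequality \eqref{eq:gen_sob} is exactly \(\mc F(u_0\mid v)\leq \frac{1}{2C}\mc I(u_0)\), so it suffices to prove the differential inequality \(\frac{d^2}{dt^2}\mc F(u)\geq 2C\,\mc I(u) = -2C\frac{d}{dt}\mc F(u)\) along the flow, and then integrate from \(0\) to \(+\infty\), using that \(\mc F(u)\to\mc F(v)\) and \(\mc I(u)\to 0\) as \(t\to\infty\) (which I would assume, as is done in the model example).

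The substantive step is therefore the computation of \(\frac{d^2}{dt^2}\mc F(u)\) and the lower bound for it. Writing \(g = \psi(u)-\psi(v)\), so that \(\partial_t u = \nabla\cdot(u\nabla g)\), one differentiates \(\mc I(u)=\int_\Omega u\,\norm{\nabla g}^2\) in time; the cleanest route is to set this in the \emph{Carré du champ} formalism promised in the introduction, with \(\Gamma(f)=\norm{\nabla f}^2\) and \(\Gamma_2(f)=\norm{\nabla^2 f}_{HS}^2 + \text{(curvature terms, here zero on }\R^d)\). After the integrations by parts one expects an identity of the shape
\[
  \frac{1}{2}\frac{d^2}{dt^2}\mc F(u) = \int_\Omega u\,\Gamma_2(g)\;+\;\int_\Omega \big(U_2(u)+\tfrac1d U(u)\big)(\Delta g)^2\;-\;\int_\Omega u\,\nabla g\cdot\nabla^2\psi(v)\,\nabla g\;+\;(\text{boundary terms}),
\]
where the \(U\) and \(U_2\) combination arises exactly because the nonlinearity \(H\) enters the pressure \(\psi(u)\) and its \(t\)-derivative through \(U,U_2\); the precise bookkeeping of how \(U_2+\frac1d U\) emerges (via \(\Gamma_2(g)\ge \frac1d(\Delta g)^2\) absorbing part of it) is the one genuinely technical calculation. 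Hypothesis \ref{hp:U} makes the \(U_2+\frac1d U\) term nonnegative, the Bochner term \(\int u\,\Gamma_2(g)\ge 0\) since there is no curvature, and Hypothesis \ref{hp:v} gives \(-\nabla^2\psi(v)\ge C I_d\), hence \(-\int_\Omega u\,\nabla g\cdot\nabla^2\psi(v)\,\nabla g \ge C\int_\Omega u\,\norm{\nabla g}^2 = C\,\mc I(u)\). Putting these together yields \(\frac{d^2}{dt^2}\mc F(u)\ge 2C\,\mc I(u)\), as wanted.

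The main obstacle is twofold. First, the boundary terms: since \(\overline\Omega\) is only assumed closed and convex (not smooth), one has to argue that the integrations by parts in the second-derivative computation produce boundary contributions with a favorable sign — this is where convexity of \(\Omega\) is used, through the fact that the second fundamental form of \(\partial\Omega\) is nonnegative, so that a term like \(-\int_{\partial\Omega} u\,\mathrm{II}(\nabla_{\!\top} g,\nabla_{\!\top} g)\) has the right sign (cf. the classical Reilly-type identities). One may need a regularization/approximation argument on \(\Omega\) to make this rigorous, or to restrict attention to half-spaces and orthants as the abstract results will later be applied to. Second, the regularity and decay of the flow: existence, smoothness, positivity, and the limits \(\mc F(u)\to\mc F(v)\), \(\mc I(u)\to 0\) as \(t\to\infty\) are nontrivial PDE facts; following the stated philosophy of the paper I would invoke them rather than prove them, or note that they can be bypassed by working with the differential inequality on a finite interval and a limiting argument. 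Once these points are granted, the proof is the two-line integration of the convexity estimate exactly as in \eqref{eq:entropy_entropy_prod}.
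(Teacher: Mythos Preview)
Your approach is essentially identical to the paper's: the same gradient flow with Neumann condition, the same Bakry--\'Emery second-derivative computation (Proposition~\ref{th:second_derivative}), the same use of convexity of \(\Omega\) to sign the boundary term, and the same acknowledgment that existence/regularity/long-time convergence require a separate desingularization argument (carried out in Section~\ref{se:pde}). One correction to your schematic identity: the weight on the \(\Gamma_2\) term is \(U(u)\), not \(u\), so the actual interior integrand is \(U_2(u)(\Delta\phi)^2 + U(u)\,\Gamma_2(\phi)\); the CD\((0,d)\) inequality \(\Gamma_2(\phi)\ge \tfrac{1}{d}(\Delta\phi)^2\) together with \(U\ge 0\) (which uses \(H(0)=0\)) then yields the combination \((U_2+\tfrac{1}{d}U)(\Delta\phi)^2\ge 0\), and \(U\ge 0\) is also what gives the boundary term its sign.
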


\begin{rmrk}
  Note that hypothesis~\ref{hp:U} is related to the hypothesis leading to McCann's displacement convexity~\cite{mcc94}: indeed, it is equivalent to asking that \(x\mapsto x^dH(x^{-d})\) be a convex function of \(x\). This is not a surprise, since the method we develop here relies, as we shall see, on the convexity of a functional along a certain path in the Wasserstein space. However, this path is \emph{not} McCann's geodesic. Interestingly, the geodesics themselves can be used to prove the Sobolev inequality~\cite{cnv04}, and it is not clear why two different paths can be used to prove the same result, using the same condition on the functional.

 As a consequence of the similarity between those hypothesis, though, concrete applications of this method beyond Rényi entropies are still lacking, just like they are for displacement convexity.
\end{rmrk}

Theorem~\ref{th:gen_sob} may be seen as an immediate corollary of the (slightly) more general theorem that follows, where we allow \(v\) to take the value zero. However, we choose to present the two theorems separate, since theorem \ref{th:gen_sob} feels a bit more natural, it being easy to relate to a gradient flow, as will be seen in section \ref{se:heart}. To formulate this more general version, we first need to define the generalized inverse of a function.
\begin{defi}\label{def:gen_inverse}
  Let \(\psi:\R_+^*\to\R\) be a continuous strictly increasing function. Its generalized inverse \(\psi^{-1*}\) is given for \(x\in\overline\R\) by  
  \begin{equation}\label{eq:gen_inverse}
    \psi^{-1*}(x)=
    \begin{cases}
      \psi^{-1}(x) & \text{if }x\in\pare{\psi(0^+),\psi(+\infty)}\\
      0  & \text{if }x \leq \psi(0^+)\\
      +\infty & \text{if }x \geq \psi(+\infty).
    \end{cases}
  \end{equation}
\end{defi}

Instead of considering a smooth function \(v\), we instead look at the generalized inverse of some convex function \(V\), or, in other words, \(v=\psi^{-1*}(-V)\).  Note that \(v\) may very well be not differentiable, even if \(H\) and \(V\) are smooth. Also, since we do not want the function \(v\) to take the value \(+\infty\), as nothing would be integrable anymore. We thus replace hypothesis \ref{hp:v} by the following
\begin{hypo}\label{hp:V}
  \(-V < \psi(+\infty)\), and \(\nabla^2 V\geq CI_d\) for some constant \(C>0\).
\end{hypo}

\begin{thrm}\label{th:gen_sob_cc}
  Let \(H\) be a strictly convex function from \(\R_+\) to \(\R\), such that \(H\) is smooth on \(\R^*_+\) and \(H(0)=0\), and define \(\psi=H'\). Fix a closed convex set \(\overline\Omega\subset\R^d\), and a smooth function \(V:\overline\Omega\to\R\). Define \(v=\psi^{-1*}(-V)\), where \(\psi^{-1*}\) stands for the generalised inverse of \(\psi\), as defined in \eqref{eq:gen_inverse}.
  
  Under hypotheses \ref{hp:U} and \ref{hp:V}, for any positive function \(u\in \mc C^\infty(\overline\Omega)\) such that \(\int_\Omega u =\int_\Omega v\), the following inequality holds
  \begin{equation}\label{eq:gen_sob_cc}
    \int_\Omega H(u)-H(v)+(u-v)V \leq \frac{1}{2C}\int_\Omega u\norm{\nabla \psi(u)+\nabla V}^2.
  \end{equation}
\end{thrm}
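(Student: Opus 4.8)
The plan is to deduce Theorem~\ref{th:gen_sob_cc} from Theorem~\ref{th:gen_sob} by an approximation that replaces the possibly vanishing, possibly non‑smooth profile \(v=\psi^{-1*}(-V)\) by honest smooth positive stationary states. One cannot achieve this by perturbing \(V\) alone: any \(V_\eps\) with \(\nabla^2V_\eps\geq CI_d\) is unbounded above, so \(-V_\eps\) cannot be kept inside \((\psi(0^+),\psi(+\infty))\) when \(\psi(0^+)>-\infty\), which is precisely the case in which \(v\) vanishes. Instead I would perturb \(H\). Put \(\eta(x)=x\log x\) and \(H_\eps:=H+\eps\eta\); then \(H_\eps\) is smooth and strictly convex on \(\R_+^*\), \(H_\eps(0)=0\), and \(\psi_\eps:=H_\eps'\), i.e.\ \(\psi_\eps(x)=\psi(x)+\eps(1+\log x)\), satisfies \(\psi_\eps(0^+)=-\infty\) and \(\psi_\eps(+\infty)=+\infty\), hence is a smooth increasing bijection from \(\R_+^*\) onto \(\R\). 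Crucially, this perturbation preserves Hypothesis~\ref{hp:U}: since \(U\) and \(U_2\) depend linearly on \(H\), and a direct computation gives \(U_\eta(x)=x\) and \(U_{2,\eta}(x)=0\), one has \(U_{2,\eps}+\tfrac1dU_\eps=(U_2+\tfrac1dU)+\tfrac{\eps}{d}x\geq 0\).

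Next, set \(v_\eps:=\psi_\eps^{-1}(-V)\in \mc C^\infty(\overline\Omega)\), which is strictly positive and satisfies \(\psi_\eps(v_\eps)=-V\), so that \(-\nabla^2\psi_\eps(v_\eps)=\nabla^2V\geq CI_d\): Hypothesis~\ref{hp:v} holds for the pair \((H_\eps,v_\eps)\) with the same constant \(C\). Because \(\nabla^2V\geq CI_d\) forces at least quadratic growth of \(V\) while \(\psi_\eps^{-1}\) decays at least exponentially near \(-\infty\), \(v_\eps\) decays at least like a Gaussian and is integrable; moreover \(v_\eps\to v\) pointwise as \(\eps\to0\) (from \(\psi_\eps\to\psi\) locally uniformly on \(\R_+^*\), together with the elementary fact that if \(-V(x)\leq\psi(0^+)\) the solution \(y\) of \(\psi_\eps(y)=-V(x)\) tends to \(0\)), and one exhibits a fixed integrable majorant for \(v_\eps\) (\(\eps\leq1\)), so \(\int_\Omega v_\eps\to\int_\Omega v\). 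Rescale \(u_\eps:=\frac{\int_\Omega v_\eps}{\int_\Omega v}\,u\) so that \(\int_\Omega u_\eps=\int_\Omega v_\eps\) and \(u_\eps\to u\). Theorem~\ref{th:gen_sob} applied to \((H_\eps,v_\eps,u_\eps)\), after the substitution \(\psi_\eps(v_\eps)=-V\), then reads
\[
  \int_\Omega H_\eps(u_\eps)-H_\eps(v_\eps)+(u_\eps-v_\eps)V\ \leq\ \frac{1}{2C}\int_\Omega u_\eps\,\norm{\nabla\psi_\eps(u_\eps)+\nabla V}^2 .
\]

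It remains to pass to the limit \(\eps\to0\). On the right‑hand side \(\psi_\eps(u_\eps)\to\psi(u)\) in \(\mc C^1_{\mathrm{loc}}\) and \(u_\eps\to u\), so the integrand converges pointwise to \(u\,\norm{\nabla\psi(u)+\nabla V}^2\); dominating it by a fixed integrable function — there is nothing to prove if the limiting integral is \(+\infty\) — gives convergence of the right‑hand side to that of \eqref{eq:gen_sob_cc}. On the left‑hand side the integrand is pointwise nonnegative: where \(v_\eps>0\) it equals the Bregman divergence \(H_\eps(u_\eps)-H_\eps(v_\eps)-(u_\eps-v_\eps)\psi_\eps(v_\eps)\geq0\), and the limiting integrand equals \(H(u)-H(v)-(u-v)\psi(v)\geq0\) where \(v>0\), while where \(v=0\) (so that \(-V\leq\psi(0^+)\)) it equals \(H(u)+uV\geq u(\psi(0^+)+V)\geq0\) by convexity of \(H\). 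Fatou's lemma applied to the left‑hand side then yields \(\liminf_{\eps\to0}\) of the left‑hand sides bounded below by \(\int_\Omega H(u)-H(v)+(u-v)V\), and combining the two estimates proves \eqref{eq:gen_sob_cc}.

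The bulk of the work, and the main obstacle, is this limit passage: constructing the uniform‑in‑\(\eps\) integrable majorants on both sides and checking that the degeneracy set \(\{v=0\}\) is harmless (there \(v_\eps\to0\), but Theorem~\ref{th:gen_sob} is only ever invoked where \(v_\eps>0\)). As already visible in the model computation of section~\ref{se:model}, one also needs an implicit decay assumption on \(u\) (of the type making \(\int_\Omega u\norm{\nabla\psi(u)+\nabla V}^2\) finite) for all the integrals to converge; under such an assumption the argument above is complete. One could alternatively attempt to prove Theorem~\ref{th:gen_sob_cc} directly by running the flow \(\partial_tu=\nabla\!\cdot\!\big(u\,\nabla(\psi(u)+V)\big)\) and repeating the \(\Gamma_2\)-computation of section~\ref{se:heart} with \(\nabla^2V\geq CI_d\) in place of Hypothesis~\ref{hp:v}, but the degeneracy of the stationary state where \(v\) vanishes makes the long‑time convergence and the boundary terms delicate, which is precisely why the approximation route is preferable.
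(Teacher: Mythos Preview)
Your strategy matches the paper's: regularise \(\psi\) into a bijection \(\psi_\eps:\R_+^*\to\R\), set \(v_\eps=\psi_\eps^{-1}(-V)\), apply the non-degenerate inequality to \((H_\eps,v_\eps)\), and pass to the limit via Fatou on the left-hand side. The difference is only in the choice of regularisation. You add \(\eps\,x\log x\) to \(H\), which is clean and makes the verification of Hypothesis~\ref{hp:U} a one-line computation; the paper instead reuses the approximation \(U_\eps\) already built in Section~\ref{se:pde} (affine outside \([\eps,1/\eps]\)), from which \(\psi_\eps\) and \(H_\eps\) are reconstructed. The paper's choice has the practical advantage that \(H_\eps\), \(\psi_\eps\) coincide \emph{exactly} with \(H\), \(\psi\) on \([\eps,1/\eps]\), so for a fixed positive \(u_0\) and small \(\eps\) one has \(H_\eps(u_0)=H(u_0)\) and \(\psi_\eps(u_0)=\psi(u_0)\); the only genuine limit is in the \(v_\eps\)-terms, and the right-hand side is literally independent of \(\eps\). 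Your additive perturbation never agrees with \(H\), so you must also control the \(u\)-dependent terms and the rescaling \(u_\eps\), which is where your acknowledged ``bulk of the work'' (integrable majorants uniform in \(\eps\)) sits. Both routes are correct; yours is more self-contained once Theorem~\ref{th:gen_sob} is taken as a black box, while the paper's minimises the limit-passage work by recycling the PDE approximation.
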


\begin{rmrk}
  In this whole paper, we consider functions with compact support in a convex domain \(\overline\Omega\), but \(\overline\Omega\) will always be closed, which means that the functions are not necessarily equal to zero on \(\partial\Omega\). This is of special importance, because we use theorems \ref{th:gen_sob} and \ref{th:gen_sob_cc} to prove trace inequalities.
\end{rmrk}

The proof to theorems~\ref{th:gen_sob} and~\ref{th:gen_sob_cc} is rather long, and so will be split into two sections: section~\ref{se:heart} contains the somewhat formal but accurate calculations, and section~\ref{se:pde} addresses all the technicalities required to make the calculations rigorous. Among various Sobolev inequalities that may be proved using these results, two are, up to our knowledge, new and of particular interest.

\begin{coro}[Trace logarithmic Sobolev inequality]\label{th:trace_ls}
  For all \(h\in\R\), and for all positive functions \(u\in C^\infty(\R^d_+)\) such that \(\int_{\R^d_+} u=1\), the following inequality holds
  \begin{equation}\label{eq:ls_full}
    \int_{\R^d_+}u\log u \leq \frac{d}{2}\log\pare{\frac{1}{2\pi de}\int_{\R^d_+}\frac{\norm{\nabla u}}{u}^2} - \log{\gamma(\R^d_{+he})}-h\pare{\int_{\partial \R^d_+}u}\pare{\frac{1}{d}\int_{\R^d_+}\frac{\norm{\nabla u}}{u}^2}^{\!-1/2},
  \end{equation}
  where \(\gamma\) stands for the standard Gaussian probability measure. Furthermore, there is equality when \(u=C_h \exp\big(-\norm{x+he}^2\big)\), where \(C_h\) is chosen such that \(\int_{\R^d_+}u=1\).
\end{coro}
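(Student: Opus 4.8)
The plan is to derive Corollary~\ref{th:trace_ls} as a special case of Theorem~\ref{th:gen_sob_cc}, with the choices corresponding to Boltzmann's entropy and a Gaussian reference density, so the main task is just bookkeeping of constants and an integration by parts that produces the boundary (trace) term. Concretely, I would take $H(x)=x\log x$ on $\R_+$, so that $\psi(x)=H'(x)=\log x + 1$, $U(x)=xH'(x)-H(x)=x$ and $U_2(x)=xU'(x)-U(x)=0$; Hypothesis~\ref{hp:U} then reads $0+\frac1d x\geq 0$, which holds. For the domain I take $\overline\Omega=\R^d_+$ (a closed convex set), and for the potential I pick $V(x)=\lambda\norm{x+he}^2+\mu$ for suitable constants $\lambda>0$ and $\mu\in\R$, so that $\nabla^2V=2\lambda I_d$ and Hypothesis~\ref{hp:V} holds with $C=2\lambda$ (the condition $-V<\psi(+\infty)=+\infty$ is automatic). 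Then $v=\psi^{-1*}(-V)=\exp(-V-1)=e^{-\mu-1}\exp(-\lambda\norm{x+he}^2)$, and I fix $\mu$ (depending on $\lambda$ and $h$) so that $\int_{\R^d_+}v=1$, matching the constraint $\int_\Omega u=\int_\Omega v$ with the hypothesis $\int_{\R^d_+}u=1$.

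Next I would plug these into inequality~\eqref{eq:gen_sob_cc}. The left-hand side is $\int_{\R^d_+}\big(u\log u - v\log v + (u-v)V\big)$. Writing $v\log v = v(-V-1)$ and using $\int u = \int v = 1$, the terms involving $V$ and the constants telescope: $-v\log v + (u-v)V = vV + v + (u-v)V = uV + v$, hence the left side equals $\int u\log u + \int uV + 1$. The right-hand side is $\frac{1}{2C}\int_{\R^d_+}u\norm{\nabla\psi(u)+\nabla V}^2 = \frac{1}{4\lambda}\int u\norm{\frac{\nabla u}{u}+2\lambda(x+he)}^2$. Expanding the square gives three pieces: $\frac{1}{4\lambda}\int\frac{\norm{\nabla u}^2}{u}$, the cross term $\int \nabla u\cdot(x+he) = -d\int u + \text{(boundary term on }\partial\R^d_+\text{)}$ after integrating by parts (here the boundary term is exactly where the trace $\int_{\partial\R^d_+}u$ and the parameter $h$ enter, since $\partial\R^d_+=\{x_d=0\}$ has outer normal $-e$, contributing $-h\int_{\partial\R^d_+}u$ when the chosen coordinate of $x+he$ along $e$ is $x_d+h$ and equals $h$ on the boundary), and $\lambda\int u\norm{x+he}^2 = \int uV - \mu$. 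Collecting everything, the $\int uV$ terms cancel between the two sides, and one is left with an inequality of the form $\int u\log u \leq \frac{1}{4\lambda}\int\frac{\norm{\nabla u}^2}{u} - d + \text{(boundary)} + (\mu - 1) - \text{(constant from }v\text{)}$.

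The final step is to optimize over the free parameter $\lambda$: the right-hand side, as a function of $\lambda$, has the shape $\frac{A}{4\lambda} + (\text{affine in }\log\lambda) + \text{const}$ once $\mu$ is expressed through the normalization $\int_{\R^d_+}v=1$ (which gives $e^{-\mu-1}=\big(\int_{\R^d_+}\exp(-\lambda\norm{x+he}^2)\big)^{-1}$, hence $\mu+1 = \frac{d}{2}\log\frac{\pi}{\lambda} + \log\gamma_\lambda(\R^d_{+he})$-type expression, where the Gaussian-measure factor accounts for the half-space truncation and the shift $h$). Minimizing $\lambda\mapsto \frac{A}{4\lambda}+\frac d2\log\frac1\lambda$ gives $\lambda = \frac{A}{2d}$ with $A=\int_{\R^d_+}\frac{\norm{\nabla u}^2}{u}$, and substituting back produces precisely the $\frac d2\log\big(\frac{1}{2\pi de}\int\frac{\norm{\nabla u}^2}{u}\big)$ term, the $-\log\gamma(\R^d_{+he})$ term, and the $-h(\int_{\partial\R^d_+}u)(\frac1d\int\frac{\norm{\nabla u}^2}{u})$ term (the last because $h$ multiplies the boundary term, and the boundary term is weighted by a factor proportional to $\lambda$, which at the optimum equals a multiple of $A/d$). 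Equality in Theorem~\ref{th:gen_sob_cc} holds when $u=v$, i.e. $u=C_h\exp(-\lambda\norm{x+he}^2)$; tracing through the optimization, the optimal $\lambda$ for this $u$ is $1$, giving $u=C_h\exp(-\norm{x+he}^2)$ as the equality case. The main obstacle I anticipate is not conceptual but careful: correctly tracking the half-space truncation in the Gaussian normalization constant and making sure the boundary integration by parts yields exactly the stated coefficient $-h(\int_{\partial\R^d_+}u)(\frac1d\int\frac{\norm{\nabla u}^2}{u})$ after the optimization in $\lambda$ — in particular verifying that no spurious terms survive and that the sign and the power of $d$ come out right.
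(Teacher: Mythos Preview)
Your strategy is the same as the paper's---apply the general entropy inequality with the Boltzmann entropy and a shifted Gaussian reference, integrate by parts to produce the trace term, then optimize over a scale parameter---but the way you introduce the scale parameter causes two problems.

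First, with \(V(x)=\lambda\norm{x+he}^2+\mu\), the cross term in the expansion of the right-hand side is
\[
\frac{1}{4\lambda}\cdot 2\int_{\R^d_+}u\,\frac{\nabla u}{u}\cdot 2\lambda(x+he)=\int_{\R^d_+}\nabla u\cdot(x+he)=-d-h\int_{\partial\R^d_+}u,
\]
which carries \emph{no} factor of \(\lambda\); your claim that ``the boundary term is weighted by a factor proportional to \(\lambda\)'' is false in this setup. Second, the normalization \(e^{-\mu-1}=\big(\int_{\R^d_+}e^{-\lambda\norm{x+he}^2}\big)^{-1}\) yields
\[
\mu+1=\frac{d}{2}\log\frac{\pi}{\lambda}+\log\gamma\bigl(\R^d_{+h\sqrt{2\lambda}\,e}\bigr),
\]
so the half-space truncation depends on \(h\sqrt{\lambda}\), not on \(h\) alone. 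After optimizing in \(\lambda\) you would be left with a term \(\log\gamma(\R^d_{+h\sqrt{2\lambda^\ast}e})\) that depends on \(u\) through the Fisher information, rather than the clean \(\log\gamma(\R^d_{+he})\) in the statement.

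The paper fixes both issues at once by putting the scale parameter on \(u\) rather than on \(V\): it applies the theorem with a fixed \(V(x)=\tfrac12\norm{x+he}^2\) (so the constant \(C_h=(2\pi)^{d/2}\gamma(\R^d_{+he})\) is \(\lambda\)-free), obtains the intermediate inequality
\[
\int_{\R^d_+}u\log u\leq -d-\log C_h+\frac12\int_{\R^d_+}\frac{\norm{\nabla u}^2}{u}-h\int_{\partial\R^d_+}u,
\]
and \emph{then} replaces \(u\) by \(u_\lambda=\lambda^d u(\lambda\,\cdot)\). Under this rescaling the Fisher information scales like \(\lambda^2\) and---because the boundary integral is \((d-1)\)-dimensional---\(\int_{\partial\R^d_+}u_\lambda=\lambda\int_{\partial\R^d_+}u\). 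This is where the genuine \(\lambda\)-weight on the trace term comes from, and since \(C_h\) is now independent of \(\lambda\) the optimization is straightforward. Your computation with \(\lambda=\tfrac12\) does recover the paper's intermediate inequality; the fix is simply to rescale \(u\) from there instead of varying \(\lambda\) inside \(V\).
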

Note that for \(h=0\), this is the standard optimal logarithmic Sobolev inequality on the half space. Interestingly, the parameter \(h\) can be chosen either positive or negative, allowing the trace term to be used as an upper or a lower bound.

\begin{coro}[GNS inequality]\label{th:gns}
  Let \(p\in(0,1)\).
  For all functions \(f\in C^\infty(\R^d_+)\), the following inequality stands
  \begin{equation}\label{eq:gns}
    \norm{f}_{p+1}\leq C_p \norm{\nabla f}_2^\theta\norm{f}_{2p}^{1-\theta},
  \end{equation}
  where
  \[
  \theta=\frac{d(1-p)}{(1+p)(d(1-p)+2p)}.
  \]
  Furthermore, there is equality when \(f= \big(1-\norm x^2\big)_+^{1/(1-p)}\), up to multiplication by a constant, rescaling, and translation by a vector in \(\R^{d-1}\times\{0\}\).
\end{coro}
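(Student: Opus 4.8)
The plan is to derive this as a special case of Theorem \ref{th:gen_sob_cc}, following the template laid out in the model example of subsection \ref{se:model}, but now on the half-space $\overline\Omega=\R^d_+$ and with a concave power nonlinearity. First I would choose $H(x)=-\tfrac{1}{\alpha'}x^{\alpha'}$ for the appropriate exponent $\alpha'$ so that the substitution $f=u^{?}$ at the end produces exactly the norms $\norm{f}_{2\alpha/(2\alpha-1)}$ and $\norm{f}_{2/(2\alpha-1)}$; tracking the homogeneity, one is led to take $\psi(x)=H'(x)$ a negative power of $x$, so that $\psi(0^+)=-\infty$ and $\psi(+\infty)=0$, which is why the generalized-inverse formulation of Theorem \ref{th:gen_sob_cc} (rather than Theorem \ref{th:gen_sob}) is the right tool: the Barenblatt-type profile $v=(1-\norm x^2)_+^{1/(\alpha-1)}$ vanishes on a whole region, so $\psi(v)$ is genuinely infinite there. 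Next I would check Hypothesis \ref{hp:U}: with $H$ a pure power, $U(x)=xH'(x)-H(x)$ and $U_2(x)=xU'(x)-U(x)$ are again pure powers, so $U_2+\tfrac1d U\ge 0$ reduces to an explicit inequality between the exponent and $d$, which will hold precisely in the stated range (this is where the constraint relating $\alpha$, $d$, and $\theta$ comes from, and where $\alpha>1$ enters). Then I would define $V$ so that $v=\psi^{-1*}(-V)$ gives the claimed profile: concretely $-V=\psi\big((1-\norm x^2)_+^{1/(\alpha-1)}\big)$ on the ball and $V$ extended so that $\nabla^2 V = C I_d$ globally — i.e. $V(x)=C\norm x^2+$const with $C$ read off from the profile — which is exactly Hypothesis \ref{hp:V}, and simultaneously one must verify $-V<\psi(+\infty)=0$, i.e. $V>0$, which holds after the correct normalization.

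With the ingredients in place, inequality \eqref{eq:gen_sob_cc} reads
\[
\int_{\R^d_+} H(u)-H(v)+(u-v)V \le \frac{1}{2C}\int_{\R^d_+} u\,\norm{\nabla\psi(u)+\nabla V}^2,
\]
and the work is to expand the right-hand side into three pieces exactly as in subsection \ref{se:model}. The square $\int u\norm{\nabla\psi(u)}^2$ becomes, by homogeneity of the power nonlinearity, a constant times $\int\norm{\nabla u^{\beta}}^2$ for the exponent $\beta$ with $u^\beta=f$; the cross term $\int u\,\nabla\psi(u)\cdot\nabla V$ is integrated by parts using the identity $u\nabla\psi(u)=c\,\nabla H(u)$ (again pure-power homogeneity) together with $\nabla^2 V=2C I_d$ — and here, crucially, since $\overline\Omega=\R^d_+$ is a half-space with boundary, the integration by parts produces a boundary term on $\partial\R^d_+$; I expect this boundary term to have the right sign (or to vanish) because the outward normal derivative of $V$ points inward appropriately, but confirming this sign is one delicate point. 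The remaining square $\int u\norm{\nabla V}^2=4C^2\int u\norm x^2$ combines with the $\int uV$ term on the left via the relation $V=C\norm x^2+\text{const}$ and the mass constraint $\int u=\int v$; likewise $\int u V$ partially cancels. After these cancellations what survives is precisely an inequality of the form $(\text{const})\le \int_{\R^d_+}\norm{\nabla f}^2 / \big(\text{norms of }f\big)$, homogeneous of the correct degrees, which is \eqref{eq:gns}; tracking the constants gives $C_\alpha$ and tracking the degrees of homogeneity gives the stated $\theta$.

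The equality case is inherited directly: equality in \eqref{eq:gen_sob_cc} holds for $u=v$ (as already noted, the entropy–entropy-production inequality is saturated there), so $f=v^{\beta}=(1-\norm x^2)_+^{1/(\alpha-1)}$ up to the claimed scalings. The invariance under multiplication by a constant, parabolic rescaling, and translation by vectors in $\R^{d-1}\times\{0\}$ is exactly the invariance group of the inequality \eqref{eq:gns} on $\R^d_+$ (translations in the last coordinate are \emph{not} allowed, which is consistent with the half-space constraint), so this needs only to be checked by the scaling bookkeeping.

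The main obstacle I anticipate is twofold: first, getting the bookkeeping of exponents self-consistent so that a single choice of $H$ (one parameter $\alpha'$) simultaneously yields the correct pair of Lebesgue exponents $2\alpha/(2\alpha-1)$ and $2/(2\alpha-1)$, the correct gradient exponent $2$, and a valid profile $v$ — and checking that Hypothesis \ref{hp:U} is then satisfied for all $\alpha>1$ and all $d$, not merely in a restricted subrange. Second, and more genuinely subtle, is the boundary term from the integration by parts of the cross term on $\partial\R^d_+$: unlike the model example on all of $\R^d$, it does not simply disappear, and one must argue either that it vanishes (because $\psi(u)$ or $H(u)$ pairs against a tangential quantity) or that it has a favorable sign; getting this wrong would spoil the constant $C_\alpha$ or even the inequality. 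I would handle this by writing the boundary integral explicitly as $\int_{\partial\R^d_+} H(u)\,(e\cdot\nabla V)$-type term and using $\nabla V = 2Cx$ so that $e\cdot\nabla V = 2C\,x_d = 0$ on $\{x_d=0\}$, which — if the profile is centered on the boundary hyperplane — makes the boundary term vanish outright.
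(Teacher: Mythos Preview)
Your overall architecture is right --- apply Theorem~\ref{th:gen_sob_cc} on $\R^d_+$ with a power nonlinearity, expand the square, integrate by parts, and then rescale --- but the choice of $H$ is the wrong one, and this propagates into several inconsistencies in your sketch.

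You propose $H(x)=-\tfrac{1}{\alpha'}x^{\alpha'}$ with $\psi=H'$ a \emph{negative} power, so that $\psi(0^+)=-\infty$ and $\psi(+\infty)=0$. That is the del Pino--Dolbeault (convex) regime $\alpha'\in(1-1/d,1)$, whose extremal profile is $(1+\norm{x}^2)^{1/(\alpha'-1)}$, everywhere positive. It cannot produce the compactly supported profile $(1-\norm{x}^2)_+^{1/(\alpha-1)}$ stated in the corollary, and it cannot match the Lebesgue exponents: following the homogeneity you yourself track, $f=u^{\alpha'-1/2}$ forces the norms $\norm{f}_{2\alpha'/(2\alpha'-1)}$ and $\norm{f}_{2/(2\alpha'-1)}$, so one needs $\alpha'=\alpha>1$, contradicting $\alpha'<1$. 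The paper instead takes
\[
H(x)=\frac{x^{\alpha}}{\alpha(\alpha-1)},\qquad \psi(x)=\frac{x^{\alpha-1}}{\alpha-1},\qquad \alpha>1,
\]
so $\psi$ is a \emph{positive} power with $\psi(0^+)=0$ and $\psi(+\infty)=+\infty$. The generalized inverse is needed not because $\psi(v)$ blows up where $v=0$ (it does not: $\psi(0)=0$), but because the range of $\psi$ is $[0,\infty)$ and $-V=\beta-\norm{x}^2$ drops below $0$; there $\psi^{-1*}(-V)=0$, which is exactly what yields the compactly supported Barenblatt profile. With this $H$ one has $U=(\alpha-1)H$ and $U_2=(\alpha-1)^2H\ge 0$, so Hypothesis~\ref{hp:U} holds for every $d$ and every $\alpha>1$ without any further restriction --- this resolves the first ``obstacle'' you anticipated.

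Two further points. Your handling of the boundary term is correct for the corollary as stated: with $V(x)=\norm{x}^2-\beta$ centered on $\partial\R^d_+$, the normal component $\partial_\nu V=-2x_d$ vanishes on $\{x_d=0\}$ and the integration by parts of the cross term produces no boundary contribution (the paper keeps a shift $he$ to obtain the trace version~\eqref{eq:trace_gns}, then sets $h=0$). However, after the cancellations you do \emph{not} land directly on a multiplicative inequality; what survives is an additive inequality of the form
\[
A\int_{\R^d_+} u^\alpha \le B + D\int_{\R^d_+}\norm{\nabla u^{\alpha-1/2}}^2
\]
under the constraint $\int u=1$. One still has to substitute $u_\lambda=\lambda^d u(\lambda\,\cdot)$ and optimize in $\lambda>0$ to reach the GNS form~\eqref{eq:gns} and read off $\theta$; this rescaling step is explicit in the paper and should be in your argument as well.
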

This is to say that the inequlaity on the half-plane is the same as the one on the whole space from del Pino and Dolbeault's paper~\cite{dpd02}, only with a different constant. Note that we focus here on the case \(p<1\), but the case \(p>1\) in theorem~\ref{th:dpd_gns} is aso a direct consequence of theorem~\ref{th:gen_sob}.
This result is actually a special case of the more general trace inequality \eqref{eq:trace_gns} that we will prove in section \ref{se:equiv}.


\section{Formal proof}\label{se:heart}

\subsection{Some words on \titlemath{$\Gamma$}{Gamma}-calculus}

As stated in the introduction, we choose in this article to stick to the Gamma calculus formalism (see \cite{bgl_book}) even though we do not study Markov semigroups. Let us very briefly introduce some notions here, which, in this particular case, are tied to the standard Laplacian \(\Delta=\sum_{i=1}^d\frac{\partial^2}{\partial x_i^2}\), but may very well be used with other diffusion operators, such as the Laplace-Beltrami operator on manifolds. 
\begin{defi}
  The carré du champ operator is the symmetric bilinear map from \(\mc C^\infty(\R^d)\times \mc C^\infty(\R^d)\) onto \(\mc C^\infty(\R^d)\) defined by
  \[
  \Gamma(a,b)=\frac{1}{2}(\Delta(ab)-a\Delta b-b\Delta a)=\nabla a\cdot\nabla b.
  \]
  Its iterated version is defined by
  \begin{align*}
    \Gamma_2(a,b)&=\frac{1}{2}(\Delta(\Gamma(a,b))-\Gamma(a,\Delta b)-\Gamma(b,\Delta a))\\
    &= \operatorname{tr}((\nabla^2a)^t\nabla^2b).
  \end{align*}
  Out of convenience, we will use the same notation for the bilinear maps and their respective quadratic maps, i.e.\ \(\Gamma(a)=\Gamma(a,a)\) and \(\Gamma_2(a)=\Gamma_2(a,a)\).
\end{defi}
With this formalism, the Hessian may be written in the following way: if \(f,\,g,\,h\) are smooth functions, then
\begin{equation}\label{eq:hessian_gamma}
  \nabla^2 f(\nabla g,\nabla h)=\frac{1}{2}\pare{\Gamma(g,\Gamma(f,h))+\Gamma(h,\Gamma(f,g))-\Gamma(f,\Gamma(g,h))}.
\end{equation}
A quick proof of this fact on manifolds can be found in \cite[Lemma 2.3]{GZ19}.

\begin{rmrk}
  The standard Laplacian on \(\R^d\) satisfies a \(CD(0,d)\) condition, or in other words
  \begin{equation}\label{eq:CD_laplacian}
    \Gamma_2(a)\geq\frac{1}{d}(\Delta a)^2
  \end{equation}
  for all smooth functions \(a\). This is nothing else than a Cauchy-Schwarz inequality, or a special case of the Bochner-Lichnerowicz inequality~\cite[Theorem C.3.3]{bgl_book}
\end{rmrk}
\begin{rmrk}
  In this article, we will consider functions defined on a closed convex subset \(\Omega\subset \R^d\). The definition of \(\Gamma,\,\Gamma_2\) trivially generalizes to such subsets.
  The major downside of using the \(\Gamma\) formalism is that the theory was not developped for functions taking nonzero values on the boundary of the domain, so instead of the usual neat integration by parts formula, we will have to use one adapted to our setting:
  \begin{align*}
    \int_\Omega \Gamma(a,b)&=-\int_\Omega b\Delta a + \int_{\partial\Omega} b\partial_\nu a\\
    &=-\int_\Omega a\Delta b + \int_{\partial\Omega} a\partial_\nu b,
  \end{align*}
  where \(\partial_\nu\) stands for the derivative along the outer normal vector.
\end{rmrk}


\subsection{Setting of the flow}

In this subsection, we assume that every function we manipulate is nice and smooth, and we rigorously prove a generalized version of inequality \eqref{eq:entropy_ineq1}, theorem \ref{th:second_derivative}, which is the key leading to theorem \ref{th:gen_sob}. We refer to section \ref{se:pde} for the technical study of the flow.

Fix some closed convex set \(\overline\Omega\in\R^d\), and some strictly convex smooth function \(H:\R^*_+\to\R\), and define \(\psi=H'\). Let \(v\in \mc C^\infty(\overline\Omega,\R_+^*)\) be a function such that
\begin{equation}\label{eq:v_convexity}
  -\nabla^2\psi(v)\geq CI_d
\end{equation}
for some positive constant \(C\).

\begin{rmrk}
  Note carefully that we assume here \(v\) to be positive, which is only true in theorem~\ref{th:gen_sob}. The rigorous proof of theorem~\ref{th:gen_sob_cc} will wait until section~\ref{se:pde}.
\end{rmrk}
\begin{rmrk}\label{rm:positivity}
  Again, since \(\Omega\) is closed, \(v\) is allowed to be nonzero at the boundary \(\partial\Omega\). This is important, since the typical example for function \(v\) is, just like in subsection \ref{se:model}, \(\psi^{-1}(a-\norm{x}^2)\), for some \(a\in\R\). Note that this inverse is not always well defined, and more generally, a positive function satisfying \eqref{eq:v_convexity} might not exist. We will come back to this in section \ref{se:pde}, as it will be of particular importance in the proof of theorem \ref{th:gen_sob_cc}.
\end{rmrk}

We consider the generalized entropy \(\mc F\) defined by
\begin{equation}\label{eq:entropy_def}
  \mc F(u)=\int_\Omega H(u)-u\psi(v).
\end{equation}
for positive smooth functions \(u\).
Since \(H\) is convex, \(\mc F(u)\geq \mc F(v)\) for any function \(u\). The idea in this section is to consider the entropy along the flow of this very entropy, namely
\begin{subequations}
  \label{eq:pde_system}
  \begin{alignat}{2}
    &\partial_tu=-\nabla\cdot(u\nabla\phi) && \quad\text{in}\  \R_+^*\times\Omega \label{eq:pde}\\
    &\partial_\nu\phi=0 &&\quad \text{in}\  \R_+^*\times\partial\Omega \label{eq:pde_boundary}\\
    &u(0,.)=u_0 &&\quad \text{in}\  \Omega. \label{eq:pde_initial}
  \end{alignat}
\end{subequations}
where \(u_0\in\mc C^\infty_c(\overline\Omega)\) is some positive initial data such that \(\int_\Omega u_0=\int_\Omega v\), and
\begin{equation}
  \phi=\psi(v)-\psi(u).
\end{equation}
Equation \eqref{eq:pde} is a generalized Fokker-Planck equation: indeed, whenever \(\psi=\log\) and \(v\) is the standard Gaussian, it is exactly a rewriting of the standard Fokker-Planck equation. We leave the technical study of this equation to section \ref{se:pde}, and assume for now that the solution to this problem not only exists at all times, is unique, but also that it is positive and smooth (at least smooth enough to do the calculations we are about to do, say \(\mc C^1\) with respect to time and \(\mc C^3\) with respect to space). As a first remark, we see that the \(L^1\) norm is preserved: using an integration by parts,
\begin{align*}
  \partial_t \int_\Omega u &= -\int_\Omega\nabla\cdot(u\nabla\phi)\\
  &=-\int_{\partial\Omega} u\partial_\nu\phi=0.
\end{align*}
Now, consider the entropy along the flow, which we write for brievity \(\Lambda(t)=\mc F(u(t,.))\) for \(t\geq0\). Differentiating the entropy with respect to time, we find, using an integration by parts,
\begin{align*}
  \Lambda'(t)&=\int \partial_tu(\psi(u)-\psi(v))\\
  &=\int \nabla\cdot(u\nabla\phi)\phi=-\int_\Omega u\Gamma(\phi)=-\mc I(u),
\end{align*}
the boundary term being zero due to the Neumann boundary condition. The reason behind the choice of the flow should now appear more clearly: the derivative of the entropy is, up to the sign, what is sometimes called the \emph{entropy creation} (or the generalized Fischer information) and written \(\mc I(u)\). More importantly, this shows that \(\Lambda'\) is nonpositive. Now, since the entropy decreases along the flow, and since \(v\) is the only global minimum of \(\mc F\) that has the same mass as \(u_0\),
it is reasonable to expect \(u\) to converge towards \(v\) in some sense as \(t\) goes to infinity, so we also assume that \(\lim_{t\to+\infty}\mc F(u)=\mc F(v)\). In the good tradition of the Bakry-Émery method, we may differentiate the entropy once more to find the following proposition.

\begin{prop}\label{th:second_derivative}
  The second derivative of the entropy along the flow of \eqref{eq:pde_system} is given by
  \begin{equation}\label{eq:second_derivative}
    \Lambda''(t)=2\int_\Omega -\nabla^2\psi(v)(\nabla\phi,\nabla\phi)u +(\Delta\phi)^2U_2(u)+\Gamma_2(\phi)U(u) - \int_{\partial\Omega}\partial_\nu\Gamma(\phi)U(u),
  \end{equation}
  where the functions \(U\) and \(U_2\) are given by
  \begin{equation}
    U(x)=xH'(x)-H(x),\text{ and }\ U_2(x)=xU'(x)-U(x),\,x\in\R^*_+.
  \end{equation}
\end{prop}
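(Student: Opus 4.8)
The plan is to compute $\Lambda''(t) = \frac{d}{dt}\big(-\mc I(u)\big) = -\frac{d}{dt}\int_\Omega u\,\Gamma(\phi)$ directly, carrying the boundary terms carefully since $u$ need not vanish on $\partial\Omega$. First I would differentiate under the integral sign, using $\partial_t u = -\nabla\cdot(u\nabla\phi)$ and the fact that $\phi = \psi(v)-\psi(u)$ depends on time only through $u$, so $\partial_t\phi = -\psi'(u)\partial_t u = \psi'(u)\nabla\cdot(u\nabla\phi)$. This splits $\Lambda''$ into a term with $\partial_t u$ times $\Gamma(\phi)$ and a term with $u$ times $2\Gamma(\phi,\partial_t\phi)$. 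In the first, I substitute $\partial_t u = -\nabla\cdot(u\nabla\phi)$ and integrate by parts to move the divergence onto $\Gamma(\phi)$, producing $\int_\Omega u\,\nabla\phi\cdot\nabla\Gamma(\phi) = \int_\Omega u\,\Gamma(\phi,\Gamma(\phi))$ plus a boundary term $-\int_{\partial\Omega} u\,\Gamma(\phi)\,\partial_\nu\phi$ which vanishes by the Neumann condition \eqref{eq:pde_boundary}. In the second term, I rewrite $\Gamma(\phi,\partial_t\phi)$ and integrate by parts to move derivatives around; the key algebraic input is the Hessian identity \eqref{eq:hessian_gamma}, which lets me express the combination of $\Gamma$'s that appears as $\nabla^2\phi$ contracted against $\nabla\phi\otimes\nabla\phi$, i.e. ultimately in terms of $\Gamma_2(\phi)$ after a further integration by parts.

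The heart of the computation is organizing the bookkeeping so that the $\psi'(u)$ factors recombine into the functions $U$ and $U_2$. The point is that $\psi=H'$, $U(x)=xH'(x)-H(x)$ satisfies $U'(x)=xH''(x)=x\psi'(x)$, and $U_2(x)=xU'(x)-U(x)$ satisfies $U_2'(x)=xU''(x)$; these identities are exactly what is needed so that terms of the form $\int u\,\psi'(u)\,(\cdots)$ and $\int u^2\psi''(u)(\cdots)$, after integration by parts, collapse into $\int U(u)(\cdots)$ and $\int U_2(u)(\cdots)$. I would also use that $\nabla\psi(v) = \nabla^2$-type terms combine: since $\phi+\psi(u) = \psi(v)$, one has $\nabla\Gamma(\phi,\psi(v)) $-type expressions that produce the $-\nabla^2\psi(v)(\nabla\phi,\nabla\phi)\,u$ term. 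Concretely, the cleanest route is: write $\Lambda'(t) = -\int_\Omega u\,\Gamma(\phi)$, differentiate, and systematically reduce every resulting term via the two integration-by-parts formulas quoted in the excerpt and the Hessian identity \eqref{eq:hessian_gamma}, tracking the boundary contributions at each step. All interior boundary terms except the final $-\int_{\partial\Omega}\partial_\nu\Gamma(\phi)\,U(u)$ should cancel or vanish: the ones carrying $\partial_\nu\phi$ die by \eqref{eq:pde_boundary}, and the remaining one is precisely the stated boundary term (here one uses that $U(u)$ rather than $u$ or $\psi'(u)$ is the correct coefficient, again by the $U' = x\psi'$ identity applied on $\partial\Omega$).

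The main obstacle I anticipate is the sheer volume of terms and getting every integration by parts and every application of \eqref{eq:hessian_gamma} exactly right, including signs and the factor of $2$; it is easy to drop or double-count a boundary term. A secondary subtlety is justifying that all these manipulations are legitimate — that $u$ is smooth and positive, decays fast enough at infinity when $\Omega$ is unbounded for the boundary-at-infinity terms to vanish, and that $\phi$ and its derivatives are controlled — but the excerpt explicitly defers this to section \ref{se:pde}, so at this stage I would simply assume the regularity stated there ($\mc C^1$ in $t$, $\mc C^3$ in $x$, with enough decay) and present the calculation formally. Once the expression is assembled, no further inequality is needed: \eqref{eq:second_derivative} is an identity, and Hypotheses \ref{hp:U} and \ref{hp:v} only enter afterwards when one bounds $\Lambda''$ from below.
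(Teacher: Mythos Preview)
Your plan is correct and matches the paper's proof essentially step for step: differentiate $\Lambda'(t)=-\int_\Omega u\,\Gamma(\phi)$, integrate the $\partial_t u\,\Gamma(\phi)$ term by parts to produce $\int_\Omega u\,\Gamma(\phi,\Gamma(\phi))$ (with the $\partial_\nu\phi$ boundary term vanishing), expand $\partial_t\phi$ and use $\psi(u)=\psi(v)-\phi$ together with the Hessian identity~\eqref{eq:hessian_gamma} to isolate the $-\nabla^2\psi(v)(\nabla\phi,\nabla\phi)u$ term, and use $U'(x)=x\psi'(x)$, $U_2'(x)=xU''(x)$ to collapse the remaining pieces into the $U$ and $U_2$ integrals after one more integration by parts. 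The paper's only organisational trick you do not name explicitly is to group the two pieces as $-2\int_\Omega u\,\Gamma\big(\phi,\partial_t\phi+\tfrac{1}{2}\Gamma(\phi)\big)$ before expanding, which makes the substitution $\psi(u)=\psi(v)-\phi$ and the application of~\eqref{eq:hessian_gamma} especially clean; your outline arrives at the same place by equivalent manipulations.
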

\begin{proof}
  Recall that \(\Lambda'(t)=-\int_\Omega u\Gamma(\phi)\). Let us differentiate this expression once more
  \begin{align*}
    \Lambda''(t)&=-\int_\Omega \pare{2u\Gamma(\phi,\partial_t\phi) +\partial_t u\Gamma(\phi)}\\
    &=-2\int_\Omega\Gamma(\phi,\partial_t\phi)u + \int_\Omega \nabla\cdot(u\nabla\phi)\Gamma(\phi) \\
    &=-2\int_\Omega\Gamma\left(\phi,\partial_t\phi+\frac{1}{2}\Gamma(\phi)\right)u  +\int_{\partial\Omega}u\Gamma(\phi)\partial_\nu\phi.
  \end{align*}
  The boundary term vanishes under the boundary condition \eqref{eq:pde_boundary}. Differentiating \(\phi=\psi(v)-\psi(u)\) with respect to time,
  \begin{align*}
    \partial_t\phi+\frac{1}{2}\Gamma(\phi)&=\nabla\cdot(u\nabla\phi)\psi'(u)+\frac{1}{2}\Gamma(\phi)\\
    &=\Gamma(\psi(u),\phi)+\frac{1}{2}\Gamma(\phi)+U'(u)\Delta\phi\\
    &=\Gamma(\psi(v),\phi)-\frac{1}{2}\Gamma(\phi)+U'(u)\Delta\phi.
  \end{align*}
  Now, on the one hand, applying equation~\eqref{eq:hessian_gamma} with \(f=\psi(v)\), \(g=h=\phi\), we find
  \begin{align*}
    -2\Gamma\pare{\phi,\Gamma(\psi(v),\phi)-\frac{1}{2}\Gamma(\phi)}u
    &=-2\nabla^2\psi(v)(\nabla\phi,\nabla\phi)u - \Gamma\big(\psi(v),\Gamma(\phi)\big)u +\Gamma\big(\phi,\Gamma(\phi)\big)u\\ 
    &= -2\nabla^2\psi(v)(\nabla\phi,\nabla\phi)u - \Gamma\big(U(u),\Gamma(\phi)\big),
  \end{align*}
  On the other hand,
  \begin{align*}
    \Gamma(\phi,U'(u)\Delta\phi)u&=\Gamma(\phi,\Delta\phi)U'(u)u+\Delta\phi\Gamma(\phi,U'(u))u \\
    &= \Gamma(\phi,\Delta\phi)U_2(u)) + \Gamma(\phi,\Delta\phi)U(u) + \Delta\phi\Gamma(\phi,U_2(u))\\
    &= \Gamma(\phi,U_2(u)\Delta \phi ) + \Gamma(\phi,\Delta\phi)U(u).
  \end{align*}
  We may now use integration by parts to find that
  \begin{multline}
    \Lambda''(t)=-2\int_\Omega\nabla^2\psi(v)(\nabla\phi,\nabla\phi)u + \int_\Omega U(u)\Delta\Gamma(\phi) - \int_{\partial\Omega}\partial_\nu\Gamma(\phi)U(u)  \\
    +2\int_\Omega(\Delta\phi)^2U_2(u) -2\int_{\partial\Omega}U_2(u)\Delta\phi\partial_\nu\phi-2\int_\Omega\Gamma(\phi,\Delta\phi)U(u),
  \end{multline}
  which concludes the proof, because \(\Gamma_2(\phi)=\frac{1}{2}\Delta\Gamma(\phi)-\Gamma(\phi,\Delta\phi)\), and because the second boundary term is zero.
\end{proof}

\(\bullet\) Now, differentiating the boundary condition \eqref{eq:pde_boundary}, and multiplying by \(\nabla\phi\), we find that
\[
0=\nabla^2\phi(\nabla\phi,\nu)+\nabla\nu(\nabla\phi,\nabla\phi)=\frac{1}{2}\partial_\nu\Gamma(\phi)+\nabla\nu(\nabla\phi,\nabla\phi),\,\text{on }\partial\Omega
\]
which, since \(\Omega\) is convex, implies that \(\partial_\nu\Gamma(\phi)\) is nonpositive.

\(\bullet\) By convexity, and since \(H(0)=0\), we know that \(U\geq0\), so that the boundary term is nonpositive.

\(\bullet\) Next, we may use the fact that the Laplacian on \(\R_d\) satisfies the \(CD(0,d)\) curvature-dimension condition \eqref{eq:CD_laplacian}, which implies that 
\[
(\Delta\phi)^2U_2(u)+\Gamma_2(\phi)U(u)\geq (\Delta\phi)^2\pare{U_2(u)+\frac{1}{d}U(u)}.
\]
Assume that this last term is nonnegative, and recall that we chose \(v\) so that \(-\nabla^2\psi(v)\geq CI\), so we may now claim that
\begin{equation}\label{eq:entropy_ineq}
  \Lambda''(t)\geq 2C\int u\Gamma(\phi) = -2C\Lambda'(t).
\end{equation}
With this inequality, we are now able to prove the following theorem:

\begin{thrm}\label{th:entropy}
  For all \(u_0\in\mc C^\infty_c(\overline\Omega)\) such that \(\int_\Omega u_0=\int_\Omega v\), the following inequality stands:
  \begin{equation}\label{eq:entropy}
    \mc F(u_0)-\mc F(v) \leq \frac{1}{2C} \mc I(u_0).
  \end{equation}
\end{thrm}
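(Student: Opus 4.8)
The plan is to integrate the differential inequality \eqref{eq:entropy_ineq} along the flow \eqref{eq:pde_system}; at this stage all the substantive computation has already been done in Proposition~\ref{th:second_derivative} and the three bullet points that follow it, so what remains is a short ODE comparison. Keep the notation $\Lambda(t)=\mc F(u(t,\cdot))$ for the entropy evaluated on the solution with initial datum $u_0$, so $\Lambda(0)=\mc F(u_0)$. We have established that $\Lambda'(t)=-\mc I(u(t,\cdot))\leq0$ and, using Hypothesis~\ref{hp:U} (which holds pointwise at $u(t,x)$) together with the convexity \eqref{eq:v_convexity} of $v$, that $\Lambda''(t)\geq-2C\Lambda'(t)$ for $t>0$.

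First I would record two elementary consequences. Since $\Lambda'\leq0$, the inequality forces $\Lambda''\geq0$, so $\Lambda'$ is nondecreasing on $(0,\infty)$; being nonpositive as well, it admits a finite limit $\ell\leq0$ as $t\to+\infty$. This limit must vanish: if $\ell<0$ then $\Lambda(t)\leq\Lambda(1)+\ell(t-1)\to-\infty$, contradicting the assumption $\Lambda(t)\to\mc F(v)$ made in the setting of the flow (and the fact that $\mc F(v)=-\int_\Omega U(v)$ is finite, with $U\geq0$). Hence $\Lambda'(t)\to0$ as $t\to+\infty$.

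Now integrate $\Lambda''(t)\geq-2C\Lambda'(t)$ over $[0,T]$, which gives $\Lambda'(T)-\Lambda'(0)\geq-2C\pare{\Lambda(T)-\Lambda(0)}$, and let $T\to+\infty$. Using $\Lambda'(T)\to0$ and $\Lambda(T)\to\mc F(v)$ this becomes
\[
-\Lambda'(0)\;\geq\;2C\pare{\mc F(u_0)-\mc F(v)},
\]
that is, $\mc F(u_0)-\mc F(v)\leq\frac1{2C}\mc I(u_0)$, which is exactly \eqref{eq:entropy}. (With slightly less input one may instead note that $t\mapsto e^{2Ct}\Lambda'(t)$ is nondecreasing, hence $\Lambda'(t)\geq e^{-2Ct}\Lambda'(0)$ for $t\geq0$; integrating this over $[0,\infty)$ and using only that $\Lambda(t)\to\mc F(v)$ gives $\mc F(u_0)-\mc F(v)=\Lambda(0)-\mc F(v)\leq-\frac1{2C}\Lambda'(0)=\frac1{2C}\mc I(u_0)$.) Finally, equality holds for $u_0=v$, since then the solution of \eqref{eq:pde_system} is stationary and both sides vanish; so the inequality is sharp.

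The formal argument above is essentially free once \eqref{eq:entropy_ineq} is in hand, and so the genuine difficulty lies entirely outside it: one must justify that the flow \eqref{eq:pde_system} has a solution smooth and positive enough for the computations of $\Lambda'$ and $\Lambda''$ (in particular Proposition~\ref{th:second_derivative}) to be legitimate, that $\mc F(u_0)$ and $\mc I(u_0)$ are finite so that $\Lambda$ is well defined, and above all that $\Lambda(t)\to\mc F(v)$ as $t\to+\infty$. Dealing with these points, together with the extension to the case where $v=\psi^{-1*}(-V)$ may vanish or fail to be differentiable, is precisely the purpose of section~\ref{se:pde}, and I expect the long-time convergence of $\Lambda$ to be the main obstacle there.
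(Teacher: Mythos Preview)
Your proposal is correct and matches the paper's proof in spirit; in fact your parenthetical alternative---observing that $t\mapsto e^{2Ct}\Lambda'(t)$ is nondecreasing, hence $-\Lambda'(t)\leq -\Lambda'(0)e^{-2Ct}$, and integrating over $[0,\infty)$---is exactly the route the paper takes. Your primary argument (integrate $\Lambma''\geq-2C\Lambda'$ once and send $T\to\infty$) is a legitimate variant but requires the extra step of proving $\Lambda'(T)\to0$, which the paper's Gr\"onwall-type route avoids since it only needs $\Lambda(T)\to\mc F(v)$.
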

\begin{proof}
  The work is essentially done with proposition~\ref{th:second_derivative}, inequality \eqref{eq:entropy_ineq} being the heart of the now classical Bakry-Émery method. Integrating inequality \eqref{eq:entropy_ineq} between \(0\) and \(t\), we find that
  \[
  -\Lambda'(t)\leq -\Lambda'(0)e^{-2Ct}
  \]
  and then once again, between \(t=0\) and \(t=+\infty\), yields
  \[
  \Lambda(0)- \lim_{t\to+\infty}\Lambda(t)\leq -\frac{1}{2C} \Lambda'(0).
  \]
  Now, recall that \(\Lambda'(0)=-\mc I(u_0)\), and further assume that \(\lim_{t\to+\infty}\mc F(u)=\mc F(v)\) to conclude.
\end{proof}
The assumption of convergence we made on the entropy will be rigorously proved in section~\ref{se:pde}. Nevertheless, we insist that it is a behavior naturally expected: indeed, the derivative of the entropy is strictly negative whenever \(\nabla\phi\neq 0\), and \(u=v\) is the only function verifying both \(\nabla\phi=0\) and \(\int u=\int v\), so mass preservation must imply this convergence.

\begin{rmrk}
  Even though we fixed the value of \(H\) at \(0\), theorem~\ref{th:entropy} is invariant under summation of \(H\) with a constant: if it is true for \(H\), it remains true for \(H+C\), where \(C\in\R\). However, while \(\psi\) is invariant under this operation, \(U\) \emph{is not}, and becomes \(U-C\). This invariance property is recovered in equation~\eqref{eq:second_derivative} with the help formula
  \begin{equation}
    2\pare{\int_\Omega \Gamma_2(\phi) - (\Delta\phi)^2} -\int_{\partial\Omega}\partial_\nu\Gamma(\phi) =0.
  \end{equation}
\end{rmrk}


\subsection{Equivalent formulations of the entropy inequality}\label{se:equiv}

As has already been seen in subsection \ref{se:model}, inequality \eqref{eq:entropy} is completely equivalent to Sobolev's inequality when \(\Omega=\R^d\), and with
\begin{align*}
  H(x)&=-x^{1-1/d},                       &  V(x)&=1+\norm{x}^2,\\
  \psi(x)&=H'(x)=-\frac{d-1}{d}x^{-1/d},  &  v(x)&=\psi^{-1}(-V)=\pare{\frac{d-1}{d}\big(1+\norm x^2\big)}^{-d}.
\end{align*}
The Sobolev inequality being a limit case of the GNS inequality, it turns out that just changing the exponant in the definition of \(H\) leads to the whole family. Indeed, inequality~\eqref{eq:entropy} with \(H(x)=-x^\alpha/\alpha\) for some \(\alpha\in\pare{1-\frac{1}{d},1}\) readily implies the GNS inequality family mentioned in \cite{dpd02},with the help of proposition~\ref{th:dpd_gns}. The case \(H(x)=x^\alpha\) for \(\alpha>1\) is also considered in \cite{dpd02}, and may be proved just the same with theorem~\ref{th:gen_sob_cc}.

It is worth noting that the choice \(\overline\Omega=\R^d_+=\R^{d-1}\times\R_+\) and \(V(x)=a+\norm{x}^2\) implies that the normal derivative of \(V\) on \(\partial\Omega\) is \(\partial_\nu V(x)= 2x\cdot\nu=0\). This simple but important fact, as will be made clearer in the proof of theorem~\ref{th:trace_gns}, may then be used to prove sharp GNS or logarithmic Sobolev inequalities on \(\R^d_+\), following the exact same calculations as for the whole Euclidean space case. See for instance \cite{bcefgg}.

Following an idea in \cite{naz06}, we may choose \(V\) to be \(V(x)=a+\norm{x+e}^2\), where \(e\) is a constant vector in \(\R^d\). Bruno Nazaret succesfully used this idea to recover the sharp Sobolev inequality on the half space \(\R^d_+\), and has later been used to prove trace GNS inequalities on the half-space in \cite{bcefgg}, and on convex domains in \cite{zug17}. Again, it proves fruitful here, where theorem~\ref{th:gen_sob} leads to the same inequalities as those found in those articles in the \(p=2\) case. We will not prove them here, as the purpose of this article is not to be exhaustive, but will instead focus on two new inequalities, which proofs can be adapted for other inequalities.

We first turn to the proof of the trace logarithmic Sobolev inequality.
\begin{proof}[Proof of corollary \ref{th:trace_ls}]
  Fix \(h\in\R\), \(\overline\Omega=\R_+^d\), and let \(e\) be the \(d\)th unit vector, which is orthogonal to \(\partial\R_+^d\). Let
  \begin{align*}
    H(x)&=x\log(x)-x,                & V(x)&=\frac{1}{2}\norm{x+he}^2,\\
    \psi&=H'=\log,&  v(x)&=\psi^{-1}(\beta_h-V)=\frac{1}{C_h}e^{-\frac{1}{2}\norm{x+he}^2},
  \end{align*}
  where \(C_h\eqqcolon\exp(\beta_h)\) has been chosen so that \(\int_{\R^d_+} v=1\), or in other words, \(C_h=(2\pi)^{d/2}\gamma(\R^d_{+he})\), with \(\gamma\) being the standard Gaussian measure. With those choices, \(U(x)=x\), \(U_2(x)=0\), so that theorem \ref{th:gen_sob} applies with constant \(C=1\). For any nonnegative \(u\in C_c^\infty(\R^d_+)\) such that \(\int u=\int v=1\), the following inequality stands
  \[
  \int_{\R^d_+} H(u)-H(v)-(u-v)\psi(v) \leq \frac{1}{2}\int_{\R^d_+} u\norm{\nabla \psi(u)+\nabla V}^2.
  \]
  Notice first that \(v\psi(v)-H(v)=U(v)=v\), so that we are left with
  \[
  \int_\Omega u\log u - u\log(v) \leq \frac{1}{2}\int_\Omega \frac{\norm{\nabla u}}{u}^2 +\frac{1}{2}\int_\Omega u\norm{\nabla V}^2 + \int_\Omega \nabla V\cdot \nabla u.
  \]
  Now, noticing that \(\frac{1}{2}\norm{\nabla V}^2= V=-\log(C_hv)\), the respective second terms on the right and left-hand side simplify. We integrate by parts the last term to find
  \begin{equation}\label{eq:ls1}
    \begin{aligned}
      \int_{\R^d_+} u\log u & \leq \frac{1}{2}\int_{\R^d_+} \frac{\norm{\nabla u}}{u}^2  -\log(C_h)\int_{\R^d_+} u-\int_{\R^d_+} u\dive(x+he)  + h\int_{\partial{\R^d_+}} u\partial_\nu(x+he) \\
      & =   -d -\log(C_h)+\frac{1}{2}\int_{\R^d_+} \frac{\norm{\nabla u}}{u}^2 -h\int_{\partial{\R^d_+}} u.
    \end{aligned}
  \end{equation}
  The inequality we thus get is already a form of logarithmic Sobolev inequality, but we may go a little bit further to find a version that is similar to the standard inequalities. To do this, we rescale the function \(u\) and optimize with respect to the parameter. Indeed, inequality \eqref{eq:ls1} stays true when replacing \(u\) by \(u_\lambda = \lambda^du(\lambda\,.)\), so, for all \(\lambda>0\), we find that
  \begin{equation}\label{eq:ls2}
    \int_{\R^d_+} u\log u  \leq  -d -\log(C_h)-d\log(\lambda) + \frac{\lambda^2}{2}\int_{\R^d_+} \frac{\norm{\nabla u}}{u}^2  -h\lambda\int_{\partial{\R^d_+}} u.
  \end{equation}
  Now, we may choose for \(\lambda\) the value that minimizes the right-hand side of the inequality, but the resulting inequality is not pretty. Instead, we choose the \(\lambda\) that we would choose if \(h=0\), or, in other words, if there was no trace term and we were trying to prove the standard inequality. Hence, for
  \[
  \lambda=\pare{\frac{1}{d}\int_{\R^d_+}\frac{\norm{\nabla u}}{u}^2}^{-\frac{1}{2}},
  \]
  inequality \eqref{eq:ls2} turns into inequality \eqref{eq:ls_full} and corollary \ref{th:trace_ls} is proved. Note that for \(u=v\) all the inequalities are, in fact, equalities, which proves optimality.
\end{proof}
\begin{rmrk}
  Another version of a trace logarithmic Sobolev inequality has been found independantly in \cite{bcefgg} using optimal transport and an improved Borell-Brascamp-Lieb inequality.
\end{rmrk}
\begin{rmrk}
  Note that while we studied the case of \(\overline\Omega=\R^d_+\), the proof can immediately be extended to convex cones, much like in \cite{zug17}. Writing \(\overline\Omega\) as the epigraph of the convex function \(\vphi\), the trace term would then become \(\int_{\R^{d-1}} u(x,\vphi(x))dx\).
\end{rmrk}

Instead of proving corollary~\ref{th:gns}, we instead showcase the method in a slightly more general case. In particular, the result showcases, just like for the logarithmic Sobolev inequality, the ease with which trace inequalities may be recovered.
\begin{thrm}\label{th:trace_gns}
  Let \(p\in(0,1)\). For all \(h\in\R\), and for all positive \(w\in C^\infty(\R^d_+)\), the following inequality stands
  \begin{equation}\label{eq:trace_gns}
    \norm{w}_{L^{1+p}(\R^d_+)}^{1+p} \leq \sbra{ a_h\norm{\nabla w}_{L^2(\R^d_+)}\norm{w}_{L^{2p}(\R^d_+)}^{p} -hb_h\norm{w}_{L^{1+p}(\partial\R^d_+)}^{1+p}}\pare{\frac{\norm{w}_{L^{2p}(\R^d_+)}}{\norm{\nabla w}_{L^2(\R^d_+)}}}^{\!\!1/\delta}.
  \end{equation}
  Furthermore, there is equality whenever \(w(x)=(\beta_h-\norm{x+he}^2)_+^{1/(1-p)}\), where \(\beta_h\) is such that
  \[
  \int_{\R^d_+} \sbra{ \pare{\frac{1-p}{2p}}\pare{\beta_h-\norm{x+he}^2} }^{2p/(1-p)}dx = 1.
  \]
\end{thrm}
\begin{rmrk}
  The exact expression of the positive constants \(a_h\) and \(b_h\) is, in our opinion, too complicated to be made explicit in the theorem; we refer to the proof of theorem~\ref{th:trace_gns} and remark~\ref{rm:constants} instead.
\end{rmrk}
\begin{proof}
  To prove this inequality, we use the Rényi entropy with power not \(p\), but \(2p/(1+p)\). Thus, fix \(\alpha=2p/(1+p)>1\), \(h\in\R\), \(\Omega=\R_+^d\), let \(e\) be the \(d\)th unit vector. Then, consider
  \begin{align*}
    H(x)&=\frac{x^\alpha}{\alpha(\alpha-1)}, & V(x)&=\norm{x+he}^2,\\
    \psi(x)&=H'(x)=\frac{x^{\alpha-1}}{\alpha-1}, &  v(x)&=\psi^{-1*}(\beta_h-V)=\big((\alpha-1)(\beta_h-\norm{x+he}^2)\big)_+^{1/(\alpha-1)},
  \end{align*}
  where, again, \(\beta_h\) has been chosen so that \(\int_{\R^d_+}v=1\). In that case, \(U(x)=(\alpha-1)H(x)\) and \(U_2(x)=(\alpha-1)^2H(x)\geq 0\), so that, again, theorem \ref{th:gen_sob_cc} applies: for all nonnegative \(u\in \mc C^\infty(\R_+^d)\) such that \(\int u=1\),
  \[
  \int_{\R^d_+} H(u)-H(v)+(u-v)V \leq \frac{1}{4}\int_{\R^d_+} u\norm{\nabla \psi(u)+\nabla V}^2.
  \]
  Expanding both sides, then doing an integration by parts and simplifying, yields
  \[
  A\int_{\R^d_+} u^\alpha \leq B_h - h\int_{\partial\R^d_+} u^\alpha + D\int_{\R^d_+} \norm{\nabla u^{\alpha-1/2}}^2,
  \]
  where \(A,\,B\) and \(D\) are positive constants given by
  \begin{align*}
    A&=\frac{1}{(\alpha-1)}+d,  &%
    B_h&=\beta_h + (\alpha-1)\int_{\R^d_+} vV,  &%
    D&=\frac{\alpha}{(2\alpha-1)^2}.
  \end{align*}
  This inequality holding for any function of unit mass, we may, just like in the proof of theorem \ref{th:trace_ls}, rescale it with respect to a certain parameter. Replacing \(u\) by \(u_\lambda=\lambda^du(\lambda\,.)\) for \(\lambda > 0\), we find that
  \begin{equation}\label{eq:trace_gns_lambda}
    A\int_{\R^d_+} u^\alpha \leq B_h\lambda^{-\delta+1} - h\lambda\int_{\partial\R^d_+} u^\alpha + D\lambda^{\delta+1}\int_{\R^d_+} \norm{\nabla u^{\alpha-1/2}}^2,
  \end{equation}
  where \(\delta=d(\alpha-1)+1>1\). All the inequalities of this family are still, of course, optimal, since one implies all the others through rescaling. To get a more compact inequality, we may write it for a well-chosen \(\lambda\). An interesting choice could be to take the infimum of the right-hand side of equation~\eqref{eq:trace_gns_lambda} with respect to \(\lambda\), but as it turns out, the trace term complicates things a bit, and the resulting inequality is not the prettiest. Instead, we choose the \(\lambda\) that corresponds to the infimum of the right-hand side when \(h=0\), that is
  \[
  \lambda=\pare{\frac{B_h(\delta-1)}{D(\delta+1)\int \norm{\nabla u^{\alpha-1/2}}^2}}^{\frac{1}{2\delta}}.
  \]
  Inequality~\eqref{eq:trace_gns_lambda} then becomes
  \begin{equation}\label{eq:trace_gns_1}
    \int_{\R^d_+} u^\alpha \leq a_h\norm{\nabla u^{\alpha-1/2}}^{1-1/\delta}_{L^2(\R^d_+)}-hb_h\norm{\nabla u^{\alpha-1/2}}_{L^2(\R^d_+)}^{-1/\delta}\int_{\partial\R^d_+} u^\alpha,
  \end{equation}
  with the constants \(a_h\) and \(b_h\) given by
  \begin{align*}
    a_h &= \frac{B_h^{(\delta+1)/2\delta}D^{(\delta-1)/2\delta}}{A}\pare{ \pare{\frac{\delta+1}{\delta-1}}^{\!\frac{\delta-1}{2\delta}} + \pare{\frac{\delta-1}{\delta+1}}^{\!\frac{\delta+1}{2\delta}} }, \\
    b_h &= B_h^{1/2\delta}D^{-1/2\delta}\pare{\frac{\delta-1}{\delta+1}}^{\!1/2\delta}.
  \end{align*}
  We now go back to the same parameters as in theorem~\ref{th:dpd_gns}: rewriting inequality~\eqref{eq:trace_gns_1} with \(w=u^{\alpha-1/2}\) and \(p=1/(2\alpha-1)\in(0,1)\), we find that for all smooth positive functions \(w\) such that \(\norm w_{2p} = \norm{u}_{1} = 1\),
  \[
  \norm{w}_{L^{1+p}(\R^d_+)}^{1+p} \leq a_h\norm{\nabla w}_{L^2(R^d_+)}^{1-1/\delta} -hb_h\norm{\nabla w}_{L^2(R^d_+)}^{-1/\delta}\norm{w}_{L^{1+p}(\partial\R^d_+)}^{1+p}.
  \]
  Finally, removing the normalization \(\norm{w}_{2p}=1\), we find
  \[
  \norm{w}_{L^{1+p}(\R^d_+)}^{1+p} \leq \sbra{ a_h\norm{\nabla w}_{L^2(R^d_+)}\norm{w}_{L^{2p}(\R^d_+)}^{p+1/\delta} -hb_h\norm{w}_{L^{1+p}(\partial\R^d_+)}^{1+p}\norm{w}_{L^{2p}(\R^d_+)}^{1/\delta}}\norm{\nabla w}_{L^2(R^d_+)}^{-1/\delta},
  \]
  which proves inequality~\eqref{eq:trace_gns}, and yields corollary~\ref{th:gns} when applied to \(h=0\) (which we can do, since \(a_h\) and \(b_h\) are well-defined for all \(h\in\R\); we refer to remark~\ref{rm:constants} for further discussion on these constants).

  Furthermore, optimality being preserved throughout this development is a direct consequence of the fact that the final inequality~\eqref{eq:trace_gns} is invariant under multiplication by a constant, as well as rescaling. Going through the proof again, choosing \(u=v\) turns all the inequalities in equalities, proving that equality is reached in inequality~\eqref{eq:trace_gns} for a rescaling of \(v^{\alpha-1/2}\), and thus for \(v^{\alpha-1/2}\) itself.  
\end{proof}

\begin{rmrk}\label{rm:constants}
  The dependence of constants \(a_h\) and \(b_h\) in \(h\) is entirely contained in the dependence of \(\beta_h\) in \(h\), as the proof shows. However, \(\beta_h\) is, up to our knowledge, not explicit. One can easily get estimates of its value: for example, it is pretty clear that for any \(h\in\R\), \(\beta_h\geq \underline\beta>0\), where \(\underline\beta\) is such that
  \[
  \int_{\R^d} \sbra{ \pare{\frac{1-p}{2p}}\pare{\underline\beta-\norm{x}^2} }^{2p/(1-p)}dx = 1.
  \]
  This \(\underline\beta\) can be calculated using Euler's \(\Gamma\) function. What is more, one can see that for \(v\) take non-zero values whenever \(h>0\), a necessary condition is that \(\beta_h > h^2\). We could refine this analysis and prove that necessarily, \(\beta_h \sim h^2\) when \(h\) goes to \(+\infty\), but this would probably be outside of the scope of the present article.
\end{rmrk}

\begin{rmrk}
  Interestingly, trace GNS inequalities in the \(p>1\) case admit a slightly nicer formulation. This is made possible in the calculations because the constant \(B_h\) changes sign, and can then be absorbed by the gradient term using Young's inequality, which just so happens to maintain optimality \cite{bcefgg}.
\end{rmrk}


\section{Study of the degenerate parabolic PDE}\label{se:pde}

In this section, we fix some convex domain \(\overline\Omega\in\R^d\). Our goal is to show that the calculations we did in section \ref{se:heart} are valid. In this context, we are only interested in proving the entropy inequality \eqref{eq:entropy}, allowing us to make use of solutions to an approximated problem rather than the nontrivial system \eqref{eq:pde_system}. We propose a quick and (almost) self-contained proof of the entropy inequality \eqref{eq:entropy}. However, the study of solutions to the full problem is both relevant and delicate, and many open questions remain. We refer for instance to the work of \cite{cjmtu01}.

Equations \eqref{eq:pde} and \eqref{eq:pde_boundary} are not only nonlinear, but also degenerate. Equation \eqref{eq:pde} may be written
\[
\partial_tu =  \Delta U(u) + \text{l.o.t},
\]
where the function \(U\) is given by \(U(x)=x\psi(x)-H(x)\), as introduced in section \ref{se:heart}. We want to modify the function \(U\) in order to have both a lower and an upper bound on the parabolicity, so that the system falls in the scope of standard parabolic theory.

To that effect, for \(\eps>0\), we choose an approximation of \(U\), written \(U_\eps\), that coincides with \(U\) in the range \(\sbra{\eps,1/\eps}\). To regain parabolicity, we want \(U_\eps\) to be strictly increasing and affine outside of that range, but we also want it smooth, so we impose that \(U_\eps\) is affine in the range \(\R\backslash\sbra{\eps/2,\eps^{-1}+\eps}\) instead, as pictured on figure \ref{fig:eta}.
\begin{figure}[ht]
  \centering
  \includegraphics{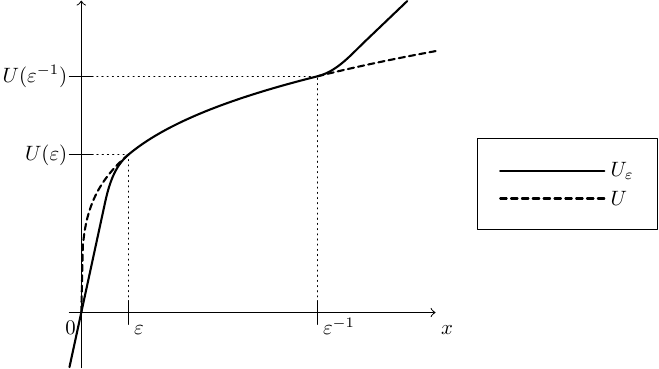}
  \caption{\(U_\eps\), an approximation of \(U\).}\label{fig:eta}
\end{figure}

From this choice of \(U_\eps\), and from the fact that \(U'(x)=x\psi'(x)\), we may also define \(\psi_\eps\) and \(H_\eps\) on \(\R_+^*\), by \(\psi_\eps(x)=\int_\eps^x \frac{U'_\eps(t)}{t} dt +\psi(\eps)\),  \(H_\eps(x)=x\psi_\eps(x)-U_\eps(x)\), so that they coincide respectively with \(\psi\) and \(H\) on the interval \(\sbra{\eps,1/\eps}\). With this definition, \(\psi_\eps\) is equivalent to a \(\log\) on \((0,\eps/2)\); for this reason, we use the function \(U_\eps\) in the formulation of the desingularized problem, because it is well-defined and smooth on the whole of \(\R\), which is needed if we want to directly apply the classical parabolic theory.

Thus, consider the problem \eqref{eq:pde_system} in which we replace \(U\) with \(U_\eps\)
\begin{subequations}
  \label{eq:pde_eps_system}
  \begin{alignat}{2}
      &\partial_tu=\Delta U_\eps(u)-\nabla\cdot(u\nabla\psi_\eps(v)) &&\quad \text{in}\  \R_+^*\times\Omega, \label{eq:pde_eps}\\
      &-\partial_\nu U_\eps(u)+u\partial_\nu\psi_\eps(v)=0 &&\quad \text{in}\  \R_+^*\times\partial\Omega, \label{eq:pde_eps_boundary}\\
      &u(0,.)=u_0 &&\quad \text{in}\  \Omega. \label{eq:pde_eps_initial}
  \end{alignat}
\end{subequations}


\subsection{Study of the desingularized problem}\label{se:pb_study}

\begin{thrm}\label{th:lsu_blackbox}
  Assume that \(\overline\Omega\) is smooth and bounded, and that \(u_0\) and \(v\) are smooth functions on \(\overline\Omega\) such that \(u_0\) verifies the compatibility condition \eqref{eq:pde_eps_boundary}. Then system \eqref{eq:pde_eps_system} admits a unique smooth solution on \(\Omega\times\R\).
\end{thrm}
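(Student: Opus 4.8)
The plan is to recognize system \eqref{eq:pde_eps_system} as a quasilinear parabolic equation with a conormal (nonlinear Neumann) boundary condition that is uniformly parabolic thanks to the construction of \(U_\eps\), and then invoke classical parabolic theory, essentially following the treatment in the book of Ladyzhenskaya--Solonnikov--Uraltseva. Concretely: writing the equation as \(\partial_t u = \operatorname{div}\big(U_\eps'(u)\nabla u - u\nabla\psi_\eps(v)\big)\), the diffusion coefficient is \(a(x,u)=U_\eps'(u)\), which by the affine-outside-a-compact-set construction satisfies \(0<c_\eps\le U_\eps'(u)\le C_\eps<\infty\) uniformly in \(u\in\R\); the lower-order terms and the boundary data are smooth with controlled growth. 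So the structural hypotheses for solvability of quasilinear parabolic problems apply.

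The key steps, in order, are as follows. First I would set up the problem on a finite time interval \((0,T)\times\Omega\) and establish an \(L^\infty\) a priori bound on any solution \(u\): since the equation is in divergence form with bounded measurable coefficients and smooth data, and since \(u_0\) is bounded, the maximum principle (or De Giorgi--Nash--Moser for the conormal problem) gives \(\|u\|_{L^\infty}\le M(T)\), with the boundary condition \eqref{eq:pde_eps_boundary} being exactly the natural conormal condition so no boundary layer appears. Second, with the uniform parabolicity and the \(L^\infty\) bound in hand, I would upgrade to Hölder regularity of \(u\) and of \(\nabla u\) up to the boundary (here the smoothness of \(\partial\Omega\) is used), then bootstrap via linear Schauder estimates for parabolic equations with conormal boundary conditions: once \(u\) is Hölder, \(a(x,u)=U_\eps'(u)\) and the lower-order terms are Hölder, linear theory gives \(u\in C^{2+\gamma,1+\gamma/2}\), and iterating gives \(u\in C^\infty(\overline\Omega\times(0,T])\). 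Third, I would handle the initial time: the compatibility condition imposed on \(u_0\) (that it satisfies \eqref{eq:pde_eps_boundary}) is precisely the first-order compatibility condition ensuring smoothness up to \(t=0\); if one wants full smoothness on \(\overline\Omega\times[0,T]\) one either assumes higher-order compatibility or simply states smoothness on \(\overline\Omega\times(0,T]\) together with continuity at \(t=0\), which suffices for the later entropy computation. Fourth, existence itself: run a fixed-point / continuity argument (Leray--Schauder) on the map sending a Hölder function \(\bar u\) to the solution of the \emph{linear} conormal problem with coefficient \(U_\eps'(\bar u)\); the a priori estimates above provide the compactness and the bound needed to close the fixed-point argument on each \([0,T]\), and the uniform-in-\(T\) \(L^\infty\) bound lets us extend to all \(t>0\), giving a global smooth solution. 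Uniqueness follows from a standard energy (Grönwall) estimate on the difference of two solutions, using the monotonicity of \(U_\eps\) and Lipschitz dependence of the coefficients.

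The main obstacle — and the only genuinely nontrivial point — is the boundary: the problem has a \emph{nonlinear conormal} boundary condition rather than a homogeneous Dirichlet one, and one must be careful that the classical quasilinear parabolic results (e.g.\ in \cite{bgl_book}'s references, or Ladyzhenskaya--Solonnikov--Uraltseva, or Lieberman's book) are being applied in their conormal-boundary form, with the compatibility condition on \(u_0\) correctly matching the order of regularity claimed. Everything else is soft once uniform parabolicity is granted, which is exactly why \(U_\eps\) was constructed to be affine (hence with derivative bounded above and below away from zero) outside \([\eps/2,\eps^{-1}+\eps]\). In the write-up I would therefore state the two ingredients explicitly — uniform parabolicity of \(U_\eps'\) and the conormal compatibility of \(u_0\) — and then cite the relevant parabolic existence-regularity theorem as a black box, which is consistent with the section's stated aim of a "quick and (almost) self-contained" treatment.
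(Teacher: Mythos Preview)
Your proposal is correct and aligns with the paper's treatment: the paper does not prove this theorem at all, stating explicitly that it is ``the only classical result we invoke'' and citing \cite[Theorem 7.4, p.~491]{LSU_book} as a black box. Your sketch of the underlying argument (uniform parabolicity from the construction of \(U_\eps\), a priori \(L^\infty\) bounds, Schauder bootstrap, Leray--Schauder existence, Gr\"onwall uniqueness, with the conormal compatibility condition) is an accurate summary of what that black box contains, and your concluding remark that one should simply cite the classical result is exactly what the paper does.
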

This is the only classical result we invoke, and we will not prove it. Its proof can be found in \cite[Theorem 7.4, p. 491]{LSU_book}. Even though there exists versions of comparison principles in~\cite{LSU_book}, we formulate our own here. Let us first define subsolutions and supersolutions.

\begin{defi}
  Let \(u_1\) (resp. \(u_2\)) be a smooth function defined on \(\R_+\times\overline\Omega\). We say that \(u_1\) is a subsolution (\(u_2\) is a supersolution) of \eqref{eq:pde_eps_system} if for all time \(t\geq 0\),
  \begin{equation}\label{eq:ss_sol}
    \begin{cases}
      \partial_tu_1\leq\Delta U_\eps(u_1)-\nabla\cdot(u_1\nabla\psi_\eps(v)) & \text{in}\ \Omega\\
      -\partial_\nu U_\eps(u_1)+u_1\partial_\nu\psi_\eps(v)\geq 0 & \text{in}\ \partial\Omega,\\
    \end{cases}\ \text{ and }\ 
    \begin{cases}
      \partial_tu_2\geq\Delta U_\eps(u_2)-\nabla\cdot(u_2\nabla\psi_\eps(v)) & \text{in}\ \Omega\\
      -\partial_\nu U_\eps(u_2)+u_2\partial_\nu\psi_\eps(v)\leq 0 & \text{in}\ \partial\Omega.
    \end{cases}
  \end{equation}
\end{defi}

\begin{rmrk}
  This definition and the following proposition are more general than we will need them, since we will only consider actual solutions of the system, but it doesn't require any additional work, so we might as well prove it.
\end{rmrk}

\begin{prop}[Comparison principle]\label{th:comp}
  If \(u_1\) is a subsolution and \(u_2\) is a supersolution to \eqref{eq:pde_eps_system} such that \(u_1\leq u_2\) at time \(t=0\), then \(u_1\leq u_2\) for all times \(t\geq 0\).
\end{prop}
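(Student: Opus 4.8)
The plan is to use the classical parabolic maximum principle argument, applied to the difference $w = u_1 - u_2$, combined with a Grönwall-type estimate. First I would reduce to a situation where the parabolicity is uniformly bounded above and below: by construction $U_\eps$ is smooth and strictly increasing on all of $\R$, with $U_\eps'$ bounded between two positive constants (it is affine with positive slope outside a compact interval, and smooth and positive inside), so there exist $0 < m \leq M < \infty$ with $m \leq U_\eps'(s) \leq M$ for all $s$. Writing the two differential inequalities in \eqref{eq:ss_sol} and subtracting, the key observation is that $\Delta U_\eps(u_1) - \Delta U_\eps(u_2) = \dive\!\big(a(x,t)\nabla w\big)$ where $a(x,t) = \int_0^1 U_\eps'\big(u_2 + s(u_1-u_2)\big)\,ds$ satisfies $m \leq a \leq M$; this is the standard device for turning the nonlinear diffusion into a linear (but non-divergence-free) elliptic operator with bounded measurable coefficients. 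Similarly the drift term $-\nabla\cdot\big((u_1-u_2)\nabla\psi_\eps(v)\big) = -\nabla\cdot\big(w\,\nabla\psi_\eps(v)\big)$ is linear in $w$ with smooth bounded coefficients on the compact set $\overline\Omega$ (here we use that $\overline\Omega$ is bounded and $v$ is smooth).

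Next I would run the $L^2$ energy estimate: multiply the differential inequality for $w$ by $w_+ = \max(w,0)$ and integrate over $\Omega$. The time derivative gives $\frac{1}{2}\frac{d}{dt}\int_\Omega w_+^2$; the diffusion term, after integration by parts, contributes $-\int_\Omega a\,\abs{\nabla w_+}^2 \leq 0$ in the interior, and the boundary term $\int_{\partial\Omega} w_+\,\partial_\nu(U_\eps(u_1)-U_\eps(u_2))$ is controlled using the boundary inequalities in \eqref{eq:ss_sol}: on $\partial\Omega$ we have $-\partial_\nu U_\eps(u_1) + u_1\partial_\nu\psi_\eps(v) \geq 0$ and $-\partial_\nu U_\eps(u_2) + u_2\partial_\nu\psi_\eps(v) \leq 0$, so subtracting, $-\partial_\nu\big(U_\eps(u_1)-U_\eps(u_2)\big) + w\,\partial_\nu\psi_\eps(v) \geq 0$, i.e. $\partial_\nu\big(U_\eps(u_1)-U_\eps(u_2)\big) \leq w\,\partial_\nu\psi_\eps(v)$; multiplying by $w_+ \geq 0$ and integrating gives a boundary term bounded by $\int_{\partial\Omega} w_+^2\,\partial_\nu\psi_\eps(v) \leq C\int_{\partial\Omega} w_+^2$, which is absorbed by a trace inequality into a small multiple of $\int_\Omega \abs{\nabla w_+}^2$ plus $C\int_\Omega w_+^2$ (using $m>0$ to spare a bit of the good diffusion term). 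The drift term is handled the same way, integrating by parts and using Cauchy–Schwarz with Young's inequality: $\int_\Omega w_+\,\nabla\psi_\eps(v)\cdot\nabla w_+ \leq \frac{m}{4}\int_\Omega\abs{\nabla w_+}^2 + C\int_\Omega w_+^2$. Collecting everything yields $\frac{d}{dt}\int_\Omega w_+^2 \leq C\int_\Omega w_+^2$, and since $w_+ = 0$ at $t=0$ by hypothesis, Grönwall's lemma forces $\int_\Omega w_+^2 \equiv 0$, i.e. $u_1 \leq u_2$ for all $t \geq 0$.

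The main obstacle, and the one point requiring genuine care rather than bookkeeping, is the boundary term: one must check that the sign conditions in the definition of sub/supersolution combine in exactly the right direction after multiplication by $w_+$, and that the resulting boundary integral can genuinely be absorbed — this needs a trace inequality of the form $\norm{g}_{L^2(\partial\Omega)}^2 \leq \eta\norm{\nabla g}_{L^2(\Omega)}^2 + C_\eta\norm{g}_{L^2(\Omega)}^2$, valid on the smooth bounded domain $\overline\Omega$, together with the uniform lower bound $m>0$ on the diffusion coefficient so that there is a strictly positive $\int_\Omega a\,\abs{\nabla w_+}^2$ available to spend. Everything else (smoothness of coefficients, boundedness of $\nabla\psi_\eps(v)$ and its normal derivative) follows from the standing assumptions that $\overline\Omega$ is smooth and bounded and that $u_1, u_2, v$ are smooth on $\overline\Omega$.
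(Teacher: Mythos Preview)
Your overall strategy---the $L^2$ energy estimate on $w_+$ followed by Grönwall---is a legitimate alternative to the paper's argument, but there is a genuine error in what you call the ``key observation''. The identity
\[
\Delta U_\eps(u_1)-\Delta U_\eps(u_2)=\dive\!\big(a\,\nabla w\big),\qquad a=\int_0^1 U_\eps'\big(u_2+s(u_1-u_2)\big)\,ds,
\]
is \emph{false}. The mean-value formula gives $U_\eps(u_1)-U_\eps(u_2)=a\,w$, so taking the Laplacian yields $\Delta(aw)=\dive(a\nabla w)+\dive(w\nabla a)$, and the second divergence does not vanish. Equivalently, $\nabla\big(U_\eps(u_1)-U_\eps(u_2)\big)=U_\eps'(u_1)\nabla u_1-U_\eps'(u_2)\nabla u_2$ is in general \emph{not} a scalar multiple of $\nabla w$. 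After testing against $w_+$ and integrating by parts you therefore acquire an extra term $-\int_\Omega w_+\,\nabla w_+\!\cdot\nabla a$; this is harmless (since $\nabla a$ is bounded on $\overline\Omega\times[0,T]$ it can be absorbed by Young exactly as you do for the drift), but it must be present in the argument. A second, smaller point: your boundary bookkeeping misses the boundary contribution $-\int_{\partial\Omega}w_+^2\,\partial_\nu\psi_\eps(v)$ coming from the drift; once you include it, the sub/supersolution inequalities give
\[
\int_{\partial\Omega}w_+\Big(\partial_\nu\big(U_\eps(u_1)-U_\eps(u_2)\big)-w\,\partial_\nu\psi_\eps(v)\Big)\le 0
\]
directly, and no trace inequality is needed at all.

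For comparison, the paper's proof is an $L^1$ argument in the spirit of Kruzhkov: it tests against $\rho_m(Z)$ with $Z=U_\eps(u_1)-U_\eps(u_2)$ and $\rho_m$ a smooth approximation of $\chi_{\R_+^*}$. The diffusion then contributes exactly $-\int_\Omega\Gamma(Z)\rho_m'(Z)\le 0$ with no spurious lower-order terms, the boundary contribution is $\le 0$ by the sign conditions, and the sole remaining term is supported on $\{0<Z<1/m\}$ and killed using $|u_1-u_2|\le\|1/U_\eps'\|_\infty\,|Z|$. One obtains $\partial_t\int_\Omega(u_1-u_2)_+\le 0$ outright, with no Grönwall constant and no trace inequality. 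The trade-off: the paper's route avoids any dependence on $\nabla a$ (hence on bounds for $U_\eps''$ and $\nabla u_i$), while your route, once repaired, is perhaps more familiar from linear parabolic theory but needs those bounds to close.
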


\begin{proof}
  Let \(u_1\) and \(u_2\) be as in \eqref{eq:ss_sol}. Their time derivatives \(\partial_tu_1,\partial_t u_2\) are continuous functions on a compact with respect to the space variable, and thus bounded at all times, hence, by domination, the following quantities are well-defined and equal:
  \[
  \partial_t\int_\Omega (u_1-u_2)_+ = \int \partial_t (u_1-u_2)_+. 
  \]
  Next, for \(m\in\N^*\), choose \(\rho_m\) to be a (non decreasing) \(C^1\) function approximating \(\chi_{\R_+^*}\). For example, consider \(\rho_m(x)=\rho(mx)\), where
  \[
  \rho(x)=
  \begin{cases}
    0 & \text{if }x\leq 0\\
    -2x^3 +3x^2 & \text{if }x\in\pare{0,1}\\
    1 & \text{if }x\geq 1,
  \end{cases}
  \]
  so that \(\norm{\rho_m'}_\infty=\frac{3}{2}m\). Using this approximation, we may write that
  \begin{equation}\label{eq:comp_approx}
    \partial_t\int_\Omega (u_1-u_2)_+ = \lim_{m\to+\infty} \int_\Omega \partial_t (u_1-u_2)\rho_m(Z),
  \end{equation}
  where \(Z\) can be any function such that \(Z(x)> 0\iff u_1(x)>u_2(x)\). We fix \(Z=U_\eps(u_1)-U_\eps(u_2)\). Since the function \(U_\eps\) is strictly increasing on \(\R\), such a \(Z\) constitutes a valid choice for equation~\eqref{eq:comp_approx}. Using~\eqref{eq:ss_sol} and integrating by parts, we find
  \begin{align*} 
    \int_\Omega \partial_t (u_1-u_2)\rho_m(Z) & \leq \int_\Omega\big(\Delta U_\eps(u_1)-\nabla\cdot(u_1\nabla\psi_\eps(v))-\Delta U_\eps(u_2)+\nabla\cdot(u_2\nabla\psi_\eps(v))\big)\rho_m(Z) \\
    &\leq\int_\Omega (-\nabla U_\eps(u_1)+\nabla U_\eps(u_2)+(u_1-u_2)\nabla\psi_\eps(v))\cdot(\rho_m'(Z)\nabla Z)\\
    &= \int_\Omega \pare{(u_1-u_2)\nabla Z\cdot\nabla\psi_\eps(v)-\Gamma(Z)}\rho'_m(Z)\\
    &\leq \frac{3}{2}m\int_{\{0<Z<1/m\}}\abs{u_1-u_2}\norm{\nabla Z}\norm{\nabla\psi_\eps(v)},
  \end{align*}
  since \(0\leq\rho'_m\leq 3m/2\), and \(\Gamma(Z)\geq0\). Finally, the mean value theorem applied to \(U_\eps\) yields
  \[
  \abs{u_1-u_2}\leq \norm{\frac{1}{U'_\eps}}_\infty \abs{U_\eps(u_1)-U_\eps(u_2)},
  \]
  which, applied to \(x\in \{ Z<1/m \}\), is enough to take the limit and conclude that
  \[
  \lim_{m\to+\infty}\int_\Omega \partial_t (u_1-u_2)\rho_m(Z)\leq 0,
  \]
  thereby concluding the proof.
\end{proof}

Let us now look into positive functions. If \(u>0\), we then write \(\phi_\eps=\psi_\eps(v)-\psi_\eps(u)\), and the equation \eqref{eq:pde_eps} takes the form
\[
\partial_tu=-\nabla\cdot(u\nabla\phi_\eps),
\]
allowing us to determine the positive stationary solutions. It is clear that \(v\) is one of them, and, more generally, all functions \(u\) such that \(\phi_\eps=\text{cst}\), are such solutions, and, as it turns out, they are the only ones. Indeed, if \(u\) is such a solution, testing equation \eqref{eq:pde_eps} against \(\phi_\eps\), and then integrating by parts and using~\eqref{eq:pde_eps_boundary}, we find
\begin{align*}
  0&=-\int_\Omega \phi_\eps\nabla\cdot(u\nabla\phi_\eps)\\
  &= \int_\Omega u \Gamma(\phi_\eps).
\end{align*}
Furthermore, notice that, by definition, \(\psi_\eps(x)=a\log(x)+b\) for all \(x\in(0,\eps/2)\), and also for all \(x>\eps^{-1}+\eps\), but with different constants. Therefore, \(\psi_\eps\) is actually a bijection between \(\R_+^*\) and \(\R\), and we may define, for any \(\alpha\in\R\), the positive stationary solution
\begin{equation}\label{eq:stationary_sol}
  v_\alpha=\psi_\eps^{-1}\pare{\psi_\eps(v)+\alpha}.
\end{equation}
These functions, being solutions, are both super- and subsolutions; and for any constant \(C>0\), we can find \(\alpha_1<\alpha_2\) such that \(0<v_{\alpha_1}<C<v_{\alpha_2}\) everywhere in \(\overline\Omega\), thus giving a priori \(L^\infty\) bounds on positive solutions, as well as \(L^{-\infty}\) bounds, both uniform in time.


\subsection{Proof of the entropy inequality}\label{se:proof_ineq}

We will now prove the entropy inequality~\eqref{eq:entropy} for the approximated entropy \(\mc F_\eps\). To that effect, owing to theorem \ref{th:lsu_blackbox} we now know that the system \eqref{eq:pde_system} has a smooth solution, so that proposition~\ref{th:second_derivative} is valid for the desingularized entropy flow. From there, three facts remain to be shown to conclude the proof of theorem \ref{th:entropy}: we will prove that
\begin{enumerate}
\item \(-\nabla^2\psi_\eps(v)\geq CI\);
\item everywhere in \(\overline\Omega\),
  \begin{equation}\label{eq:curvature_condition}
    U_{\eps,2}(u)+\frac{1}{d}U_\eps(u)=\pare{\frac{1}{d}-1}U_\eps(u)+uU_\eps'(u)\geq 0;
  \end{equation}
\item  the entropy \(\mc F_\eps(u)\) converges to \(\mc F_\eps(v)\) when \(t\to+\infty\).
\end{enumerate}
For the first point, we may assume that \(\eps\) has been chosen so that \(\eps\leq v\leq \eps^{-1}\) everywhere in \(\overline\Omega\). This implies that \(\psi_\eps(v)=\psi(v)\), and trivially, \(\nabla^2\psi_\eps(v)\leq-CI\).

The second point boils down to the construction of \(U_\eps\). We have assumed that \(U_{2}(u)+\frac{1}{d}U(u)\geq 0\), so inequality \eqref{eq:curvature_condition} is of course satisfied whenever \(\eps<u<\eps^{-1}\). We also made it so that for all \(r<\eps/2\), \(U_\eps(r)=ar\) for some \(a>0\). Then \(U_{\eps,2}(r)=0\), which directly implies that inequality \eqref{eq:curvature_condition} is satisfied in that range, and the same argument works for the range \(r>\eps+\eps^{-1}\). It thus suffices to show that inequality \eqref{eq:curvature_condition} is satisfied in the ranges \(\pare{\eps/2,\eps}\) and \(\pare{\eps^{-1},\eps^{-1}+\eps}\). It turns out that the choice of the smooth connections can be made so that it is true: to convince oneself of this fact, notice that it suffices to choose a smooth nonnegative connection for the quantity \(U_{\eps,2}(u)+\frac{1}{d}U_\eps(u)\) on the interval \((\eps/2,\eps)\) (and also on the interval \((\eps^{-1},\eps^{-1}+\eps)\)) and then use the following identity to recover~\(U_\eps\)
\[
\pare{\frac{1}{d}-1}U_\eps(x) + xU'_\eps(x) = x^{2-1/d}\pare{x^{-1+1/d}U_\eps(x)}',
\]
which also guarantees that \(U_\eps'>0\). Finally, we prove the following lemma:
\begin{lemm}
  If \(\int_\Omega u_0=\int_\Omega v\), then \(u\) converges towards \(v\) almost everywhere, and
  \[
  \lim_{t\to+\infty}\mc F_\eps(u)=\mc F_\eps(v).
  \]
\end{lemm}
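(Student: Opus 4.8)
The plan is to exploit that the entropy is nonincreasing and bounded below along the flow, extract a limit, and identify it as $v$ by the dissipation estimate. First I would note that $\Lambda(t)=\mc F_\eps(u(t,\cdot))$ is nonincreasing (from $\Lambda'(t)=-\int_\Omega u\Gamma(\phi_\eps)\le 0$) and bounded below by $\mc F_\eps(v)$, using convexity of $H_\eps$ together with the mass constraint $\int_\Omega u=\int_\Omega v$. Hence $\Lambda(t)$ converges to some limit $\ell\ge\mc F_\eps(v)$ as $t\to+\infty$, and moreover $\int_0^{+\infty}\mc I_\eps(u(t,\cdot))\,dt=\Lambda(0)-\ell<+\infty$, so that along some sequence $t_n\to+\infty$ we have $\mc I_\eps(u(t_n,\cdot))=\int_\Omega u(t_n,\cdot)\Gamma(\phi_\eps(t_n,\cdot))\to 0$.

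Next I would use the uniform-in-time $L^\infty$ and $L^{-\infty}$ bounds established at the end of subsection~\ref{se:pb_study}: there exist constants $0<c_-\le c_+<+\infty$ with $c_-\le v_{\alpha_1}\le u(t,\cdot)\le v_{\alpha_2}\le c_+$ on $\overline\Omega$ for all $t\ge 0$, by the comparison principle. Combined with parabolic regularity (interior and up-to-the-boundary Schauder estimates for the uniformly parabolic desingularized equation, the parabolicity being nondegenerate precisely because $U_\eps'>0$ is bounded away from $0$ and $+\infty$), the family $\{u(t_n,\cdot)\}$ is precompact in, say, $C^1(\overline\Omega)$, so up to a further subsequence $u(t_n,\cdot)\to u_\infty$ in $C^1(\overline\Omega)$ with $c_-\le u_\infty\le c_+$ and $\int_\Omega u_\infty=\int_\Omega v$. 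Passing to the limit in $\mc I_\eps(u(t_n,\cdot))\to 0$ and using the lower bound $u_\infty\ge c_->0$ gives $\int_\Omega\Gamma(\psi_\eps(u_\infty)-\psi_\eps(v))=0$, hence $\psi_\eps(u_\infty)-\psi_\eps(v)$ is constant on the connected set $\overline\Omega$; since $\psi_\eps$ is a bijection of $\R_+^*$ onto $\R$, $u_\infty=v_\alpha$ for some $\alpha\in\R$, and the mass constraint forces $\alpha=0$, i.e.\ $u_\infty=v$. Therefore $\ell=\lim_n\Lambda(t_n)=\mc F_\eps(v)$, and by monotonicity $\lim_{t\to+\infty}\mc F_\eps(u)=\mc F_\eps(v)$.

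Finally, for the almost-everywhere convergence of $u$ to $v$ (not merely along a subsequence), I would argue that the limit is independent of the subsequence: any sequence $t_n\to+\infty$ has, by the same compactness plus dissipation argument, a further subsequence along which $u(t_n,\cdot)$ converges uniformly to some stationary profile $v_\alpha$ with the correct mass, hence to $v$; since every subsequence has a sub-subsequence with the same limit $v$, the full family $u(t,\cdot)$ converges uniformly (a fortiori almost everywhere) to $v$ as $t\to+\infty$. I expect the main obstacle to be the compactness step: one must invoke parabolic Schauder or $L^p$ estimates up to the boundary for the Neumann problem~\eqref{eq:pde_eps_system}, uniformly in $t$, which is where the nondegeneracy built into $U_\eps$ and the smoothness of $\overline\Omega$ (assumed in theorem~\ref{th:lsu_blackbox}) are essential; the identification of the limit and the monotonicity are then routine.
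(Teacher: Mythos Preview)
Your argument is correct and leads to the same conclusion, but the route differs from the paper's in two respects. First, to get a sequence along which the entropy production vanishes, you use only $\Lambda'\le 0$ and the lower bound $\Lambda\ge\mc F_\eps(v)$, yielding $\int_0^\infty\mc I_\eps(u)\,dt<\infty$ and hence $\mc I_\eps(u(t_n,\cdot))\to 0$ along a subsequence. The paper instead exploits the second--derivative inequality already established, which gives the stronger exponential decay $\mc I_\eps(u(t,\cdot))\le e^{-2Ct}\mc I_\eps(u_0)$, so that $\mc I_\eps\to 0$ along the full limit and no subsequence extraction is needed at this stage. Second, for compactness you invoke parabolic Schauder estimates to obtain precompactness in $C^1(\overline\Omega)$; the paper proceeds with lighter tools, using only the uniform $L^\infty$ bounds and the bound $\|\nabla\phi_\eps\|_2^2\le m^{-1}\mc I_\eps(u)$ to get uniform $H^1$ bounds on $\phi_\eps$ (and on $u$), then weak $H^1$ compactness and Rellich--Kondrachov to pass to an almost--everywhere limit. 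Your approach calls in more external machinery but yields uniform convergence, which makes the identification of the limit and the upgrade from subsequential to full convergence cleaner; the paper's approach stays closer to the energy structure of the problem and avoids appealing to Schauder theory, at the cost of a slightly more delicate limit identification via weak lower semicontinuity.
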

\begin{proof}
  The comparison principle \ref{th:comp} ensures that there exists constants \(0<m<M\) such that \(m\leq u\leq M\) for all \((x,t)\in \overline\Omega\times\R_+\). Recall the proof of theorem~\ref{th:entropy}, we showed that
  \[
  0\leq \mc I_\eps(u) \leq e^{-2Ct}\mc I_\eps(u_0),
  \]
  so it is clear that \(\lim_{t\to+\infty}I_\eps(u)=0\), which readily implies that \(\lim_{t\to+\infty}\norm{\nabla \phi_\eps}_2=0\), since \(\mc I_\eps(u)=\int_\Omega u\Gamma(\phi_\eps)\geq m \int_\Omega\Gamma(\phi_\eps)=m\norm{\nabla\phi_\eps}_2^2\).
  The fact that \(u\) is uniformly bounded on \(\Omega\times\R_+\) implies that \(\phi_\eps\) is, too. Thus, \(\phi_\eps\) is uniformly bounded in \(H^1(\Omega)\), and we may extract a sequence of real numbers \((t_k)_{k\in\N}\) such that \(\phi_\eps\vert_{t=t_k} \rightharpoonup \phi_*\) weakly in \(H^1(\Omega)\). By weak lower semicontinuity, \(\norm{\nabla \phi_*}_2\leq \liminf_{k\to+\infty} \norm{\phi_\eps\vert_{t=t_k}}_2=0\), so that \(\phi_*\) is in fact a constant.

  Now, since \(u\) is also, in fact, bounded in \(H^1(\Omega)\), we may, without loss of generality, assume that \(u\vert_{t=t_k}\) converges almost everywhere to some function \(u_*\). By uniqueness of the limit,
  \[
  \phi_*=\psi(v)-\psi(u_*),
  \]
  so that \(u_*\) is actually one of the positive stationary solutions of \eqref{eq:pde_system} defined in equation \eqref{eq:stationary_sol}. But the fact that the flow is mass-preserving, combined with the dominated convergence theorem, implies that
  \[
  \int_\Omega u_* = \int_\Omega u_0 = \int_\Omega v,
  \]
  but the only stationary solution that has the same mass as \(v\) is \(v\) itself, so that \(u_*=v\). Indeed,
  \[
  \frac{d}{d\alpha}v_\alpha=\frac{1}{\psi_\eps'\circ\psi_\eps^{-1}(\psi_\eps(v)+\alpha)} >0.
  \]
  Finally, we may conclude that \(u\) converges almost everywhere to \(v\) as \(t\to+\infty\), and invoking, once again, dominated convergence, \(\lim_{t\to+\infty}\mc F_\eps(u)=\mc F_\eps(v)\).
\end{proof}

At this stage, we have proved the following: there exists \(\eps_0>0\), depending only on \(v\) and \(\Omega\), such that for all \(\eps\in(0,\eps_0)\),
\begin{equation}\label{eq:entropy_approx}
  \mc F_\eps(u_0)-\mc F_\eps(v) \leq \frac{1}{2C} \mc I_\eps(u_0)
\end{equation}
for all \(u_0\in \mc C^\infty_c(\overline\Omega)\), provided that \(\int_\Omega u_0=\int_\Omega v\) and that \(u_0\) satisfies the approximated compatibility condition \eqref{eq:pde_eps_boundary}. Now, fix some positive smooth function \(u_0\) satisfying the regular compatibility condition \eqref{eq:pde_boundary}, and that has the same mass as \(v\), then fix any \(0<\eps<\min(\eps_0,\min(u_0))\). By construction, the approximated entropy of \(u_0\) is then the same as the regular entropy of \(u_0\), and the same goes for the entropy production, so inequality \eqref{eq:entropy_approx} is valid, and is identical to inequality~\eqref{eq:entropy}.


\subsection{Extension to convex domains and generic smooth positive functions}\label{se:extension}

We have now proved that the entropy inequality
\begin{equation}\label{eq:entropy2}
  \mc F(u_0)-\mc F(v) \leq \frac{1}{2C} \mc I(u_0)
\end{equation}
holds true for smooth and positive functions \(u_0\) defined on a compact, convex and smooth set, as long as they verify the compatibility condition \eqref{eq:pde_boundary}, and that they have the same mass as the function \(v\). Working on the inequality \eqref{eq:entropy2} rather than the partial differential equation \eqref{eq:pde_system}, we may generalize this result by lifting the constraints.

Let us first extend the class of functions for which inequality \eqref{eq:entropy2} holds true. Let \(\overline\Omega\subset \R^d\) be compact, convex and smooth, and let \(u\) be a positive and smooth function defined on \(\overline\Omega\). We want to construct \(\tilde u=u+g\), an approximation of \(u\) that verifies the compatibility condition \eqref{eq:pde_boundary}: on \(\partial\Omega\),
\begin{align*}
    \partial_\nu(\psi(v)-\psi(\tilde u))&=0 \\
  \iff  \partial_\nu(g + u)\psi'(g + u)& = \partial_\nu\psi(v),
\end{align*}
and it is thus sufficient to find a function \(g\), small in some sense, such that, on \(\partial\Omega\),
\begin{align}
  g&=0, \label{eq:g0}\\
  \partial_\nu g&=\frac{\partial_\nu\psi(v)}{\psi'(u)}-\partial_\nu u=\frac{\nabla(\psi(v)-\psi(u))}{\psi'(u)}\cdot\nu. \label{eq:gprime0}
\end{align}
In dimension 1, the construction is somewhat straightforward. Assume just for now that \(\overline\Omega=\R_+\). The problem reduces to finding a reasonably small function that is zero on \(\partial \R_+=\{0\}\), and that has an assigned slope at that same point. We thus construct a function that looks like a small ridge: choose \(\eta\), defined on \(\R_+\), such that it is smooth, has compact support in \(\sbra{0,3}\), and is equal to identity on \(\sbra{0,1}\), like pictured on figure \ref{fig:eta2}. Then, the function \(x\mapsto C\delta\eta(x/\delta)\), where \(C\) is the desired slope at zero, satisfies everything we need: its \(L^\infty\) norm tends towards zero when \(\delta\to0\), the \(L^\infty\) norm of its derivative is bounded, and its support is included in \(\sbra{0,3\delta}\).
\begin{figure}[ht]
  \centering
  \includegraphics{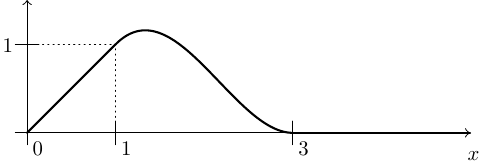}
  \caption{A ridge function \(\eta\).}\label{fig:eta2}
\end{figure}

Let us now return to more general \(\Omega\), and extend this construction. Note that since \(\Omega\) is smooth, its boundary admits a neighbordhood verifying the \emph{unique nearest point} property (for a short reference, see for instance \cite{foote}). In other words, there exists an open neighbordhood \(U\) of \(\partial\Omega\) and a smooth function \(P: U\to \partial\Omega\) such that for all \(x\in U\), \(d(x,\partial\Omega)=\norm{x-P(x)}\). This function \(P\) is called the projection onto \(\partial\Omega\), it is smooth, and its gradient at \(x\) is orthogonal to the tangent space at \(P(x)\). Thus, for all \(\delta\in(0,\delta_0)\), with \(\delta_0\) sufficiently small, the function
\[
g(x)=\delta\eta(d(x,\partial\Omega)/\delta)\pare{\frac{\partial_\nu\psi(v)}{\psi'(u)}-\partial_\nu u}(P(x))
\]
is well-defined, smooth, and satisfies both assumptions \eqref{eq:g0} and \eqref{eq:gprime0}. 
Furthermore, writing \(C_g=\pare{\int_\Omega u}\pare{\int_\Omega u+g}^{-1}\) and invoking the dominated convergence theorem, it is quite clear that \(\lim_{\delta\to0}\mc F(C_g(u+g))= \mc F(u)\), and \(\lim_{\delta\to0}\mc I(C_g(u+g))=\mc I(u)\), and thus the compatibility condition is lifted. Next, we want to further extend the result to more general domains. Let \(\overline\Omega\) be convex and compact and \(u\in C^\infty_c(\overline\Omega)\). The domain \(\overline\Omega\) may be approximated from within by smooth convex sets: 
\begin{lemm}
  For each \(\eps>0\), there exists \(\overline\Omega_\eps\subset\overline\Omega\) such that \(\overline\Omega_\eps\) is smooth and convex, and \(\abs{\Omega\backslash\Omega_\eps}<\eps\).
\end{lemm}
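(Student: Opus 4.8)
The plan is to obtain $\overline\Omega_\eps$ as a smooth sublevel set of a mollified Minkowski gauge. We may assume $\Omega$ has nonempty interior — otherwise $\abs{\Omega}=0$ and there is nothing to prove — and, after a translation, that $0$ lies in its interior, so that $B(0,\rho_0)\subseteq\overline\Omega\subseteq B(0,R)$ for some $0<\rho_0\le R$. Let $\gamma(x)=\inf\{t>0:x\in t\overline\Omega\}$ be the gauge of $\overline\Omega$: it is convex, positively $1$-homogeneous, subadditive (because $\overline\Omega$ is convex), satisfies $\abs{x}/R\le\gamma(x)\le\abs{x}/\rho_0$, and $\overline\Omega=\{\gamma\le1\}$, with $\{\gamma\le s\}=s\overline\Omega$ for every $s>0$. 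For $\delta>0$ I would set $\gamma_\delta=\gamma*\phi_\delta$, the convolution with a nonnegative smooth mollifier supported in $B(0,\delta)$ and of unit mass; then $\gamma_\delta\in C^\infty(\R^d)$ is convex. The claim is that $\overline\Omega_\eps\coloneqq\{\gamma_\delta\le c\}$ works, for $c\coloneqq1-2A\delta$ with $A\coloneqq1/\rho_0$ and $\delta$ small.

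First I would control the set. Subadditivity and homogeneity give $\abs{\gamma(x)-\gamma(x-y)}\le\max\{\gamma(y),\gamma(-y)\}\le\delta/\rho_0$ whenever $\abs{y}\le\delta$, hence $\norm{\gamma_\delta-\gamma}_\infty\le A\delta$. Combined with $\{\gamma\le s\}=s\overline\Omega$ this yields, as soon as $1-3A\delta>0$, the two-sided inclusion $(1-3A\delta)\overline\Omega\subseteq\{\gamma_\delta\le c\}\subseteq(1-A\delta)\overline\Omega\subseteq\overline\Omega$, from which $\abs{\Omega\setminus\Omega_\eps}\le\bigl(1-(1-3A\delta)^d\bigr)\abs{\Omega}$, which goes to $0$ as $\delta\to0$. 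So it suffices, in the end, to fix $\delta$ small enough that the right-hand side is $<\eps$ (and $1-3A\delta>0$).

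Then I would check regularity. Since $\gamma_\delta(0)=\int\gamma(-y)\phi_\delta(y)\,dy\le A\delta<c$, the infimum of $\gamma_\delta$ on $\R^d$ is $<c$. For a convex $C^1$ function every critical point is a global minimizer, so $\nabla\gamma_\delta$ does not vanish on the level set $\{\gamma_\delta=c\}$; by the implicit function theorem this level set is a $C^\infty$ embedded hypersurface, and it coincides with the topological boundary of the convex set $\{\gamma_\delta\le c\}$, which has nonempty interior (containing $(1-3A\delta)\overline\Omega$). Hence $\overline\Omega_\eps$ is a compact convex set with $C^\infty$ boundary contained in $\overline\Omega$, completing the proof.

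The hard part is really only this last regularity point. The obvious interior approximations — a rescaled copy $(1-\delta)\overline\Omega$, or an inner parallel body $\{x\in\Omega:d(x,\partial\Omega)\ge\delta\}$ — are convex, interior and Hausdorff-close to $\overline\Omega$, but the first retains the corners of $\overline\Omega$ and the second is only $C^{1,1}$ in general. Mollifying the gauge repairs this with essentially no extra work, the point being that any value strictly above the minimum of a convex $C^1$ function is automatically a regular value, so one needs neither Sard's theorem nor any curvature hypothesis to reach $C^\infty$.
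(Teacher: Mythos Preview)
Your proof is correct, and takes a genuinely different route from the paper's. The paper mollifies the distance function \(d_\Omega(x)=d(x,\Omega)\), takes a small sublevel set \(\{d_\Omega*\rho_\delta<t\}\), invokes Sard's theorem to pick a regular value \(t\) so that the boundary is smooth, and then controls \(\abs{\Omega\setminus\Omega_\eps}\) via the Brunn--Minkowski inequality. You mollify the Minkowski gauge instead, and this buys you two simplifications. First, because the sublevel sets of the gauge are dilations of \(\overline\Omega\), your mollified sublevel set is sandwiched between \((1-3A\delta)\overline\Omega\) and \((1-A\delta)\overline\Omega\), and the measure bound follows from the elementary identity \(\abs{s\Omega}=s^d\abs{\Omega}\) with no need for Brunn--Minkowski. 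Second, you bypass Sard entirely by the clean observation that for a \(C^1\) convex function every critical point is a global minimizer, so any value strictly above the minimum is automatically regular. It is worth noting that this last observation applies equally well to the paper's argument, since \(d_\Omega*\rho_\delta\) is itself convex and smooth with minimum \(0\); so the appeal to Sard there was in fact unnecessary. Your approach is thus slightly more elementary on both counts.
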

Using this fact, for any \(\eps>0\), inequality \eqref{eq:entropy2} holds for the restriction of \(C_\eps u\vert_{\Omega_\eps}\), where \(C_\eps\) is the normalisation constant \(C_\eps=\pare{\int_\Omega u}\pare{\int_{\Omega_\eps} u}^{-1}\). The dominated convergence theorem allows to take the limit when \(\eps\to 0\), proving inequality \eqref{eq:entropy2} for compact domains. We may finally extend the result for unbounded domains by considering \(\overline\Omega\cap B(0,R)\), where \(\overline\Omega\) is assumed to be closed and convex. Again, dominated convergence allows to take the limit \(R\to+\infty\), whence we proved theorem \ref{th:gen_sob} in its full generality.

\begin{proof}[Proof of the lemma]
  Let \(\overline\Omega\subset\R^d\) be compact and convex. Fix the distance function \(d_\Omega:x\mapsto d(x,\Omega)\), and choose some smoothing kernel \(\rho:\R^d\to\R_+\), such that \(\rho\in C^\infty_c(\R_d)\), and satisfiying \(\int \rho=1\) and \(B_1\subset\supp(\rho)\subset B_2\). Define, for \(\delta>0\), \(\rho_\delta=\delta^{-d}\rho(./\delta)\). The function \(d_\Omega*\rho_\delta\) is smooth, and also convex since \(\rho\geq 0\) and \(\Omega\) is convex. Now, notice that
  \[
  \{x\in\Omega, d(x,\partial\Omega)>2\delta\} \subset \{d_\Omega*\rho_\delta=0\} \subset \{x\in\Omega, d(x,\partial\Omega)>\delta\}.
  \]
  We now claim that there exists \(t_0>0\) such that \(\{d_\Omega*\rho_\delta<t_0\}\subset\Omega\). This is due to the continuity of \(d_\Omega*\rho_\delta\), and also the fact that \(\overline\Omega\) is compact. Now, by Sard's theorem, there exists a \(t\in(0,t_0)\) such that \(\{d_\Omega*\rho_\delta<t\}\) is smooth, and convex since it is a sublevel set of a convex function, and
  \[
  \{x\in\Omega, d(x,\partial\Omega)>2\delta\} \subset \{d_\Omega*\rho_\delta<t\} \subset \Omega.
  \]
  Finally, notice that \(\{x\in\Omega, d(x,\partial\Omega)>2\delta\} + B_{2\delta} = \mathring{\Omega} \), and thus, Brunn-Minkowski's inequality allows us to conclude that we may have chosen \(\delta\) small enough so that \(\abs{\Omega\backslash\{d_\Omega*\rho_\delta<t\}}<\eps\), which concludes the proof.
\end{proof}

\subsection{Generalized inverse}

In this subsection, we prove theorem \ref{th:gen_sob_cc}.
As mentioned in remark \ref{rm:positivity}, positive functions \(v\) satisfying \(-\nabla^2\psi(v)\geq CI_d\) might not alway exist. The natural example of when this is a problem is the flow related to the porous medium equation: when \(H(x)=\frac{x^\alpha}{\alpha(\alpha-1)}\), with \(\alpha>1\), then \(\psi\) is a one to one map from \(\R_+\) onto itself, and for any choice of \(a\in\R\), the function \(x\mapsto a-\norm x^2\) takes negative values. We would like to still make sense of this computation in that case.

Instead of fixing the function \(v\), choose a function \(V\in\mc C^\infty(\Omega,\R)\) such that its Hessian is bounded below, \(\nabla^2 V\geq CI_d\). While \(\psi^{-1}(-V)\) might not be well defined, we may consider, for \(\eps>0\), the function \(v_\eps=\psi_\eps^{-1}(-V)\). Recall that \(\psi_\eps\) behaves like a natural logarithm on a neighbourhood of zero, as well as towards infinity, so that \(v_\eps\) is well defined for all \(\eps>0\). Furthermore, \(v_\eps\in C^\infty(\Omega,\R^*_+)\). As \(\eps\) goes to \(0\), \(v_\eps\) converges to the so-called \emph{generalized inverse}  of \(\psi\), applied to \(-V\), which we will write \(\psi^{-1*}\):
\begin{itemize}
\item if \(\psi(0^+)<-V<\psi(+\infty)\), then it is clear that \(\lim_{\eps\to0}\psi_\eps^{-1}(-V) = \psi^{-1}(-V)\);
\item if \(-V\leq\psi(0^+)\), then, in particular, \(-V<\psi_\eps(\eps)\), and so \(0<\psi_\eps^{-1}(-V)<\eps\), so that \(\lim_{\eps\to0}\psi_\eps^{-1}(-V)=0\);
\item finally, if \(-V\geq\psi(+\infty)\), \(\psi_\eps^{-1}(-V)>1/\eps\), proving that \(\lim_{\eps\to0}\psi_\eps^{-1}(-V)=+\infty\).
\end{itemize}
Let \(v=\psi^{-1*}(-V)=\lim_{\eps\to 0} v_\eps\). Note that the function \(v\) is, in general, not even differentiable. For example, in the case where \(H(x)=x^2/2\) and \(V(x)=1-\norm{x}^2\), the generalized inverse of \(\psi(x)=x\) and the limit function \(v\) are given by
\[
\psi^{-1*}(x)=x_+,\quad v(x)=\pare{1-\norm x^2}_+.
\]
We do not really know how to make sense of the case where \(v\) is not finite, so we further assume that \(-V<\psi(+\infty)\) everywhere.

We may now fix an \(\eps>0\) and return to the previous subsections, where we replace the function \(v\) by the function \(v_\eps\). The study of the partial differential equation, subsection \ref{se:pb_study} remains unchanged, and the conclusions are the same. As far as subsection \ref{se:proof_ineq}, points \(2.\) and \(3.\) are unchanged too, and point \(1.\) is trivial: \(-\nabla^2\psi_\eps(v_\eps)=\nabla^2 V\geq CI_d\) by hypothesis, and the conclusion is still valid. If \(\overline\Omega\) is compact, convex and smooth, for all smooth positive functions \(u_0\) satisfying both \(\int_\Omega u_0 = \int_\Omega v_\eps\) and the compatibility condition \eqref{eq:pde_eps_boundary},
\[
\mc F_\eps(u_0) - \mc F_\eps(v_\eps) \leq \frac{1}{2C}\mc I_\eps(u_0).
\]
Again, following section \ref{se:extension} we may lift the compatibility condition, as well as the smoothness condition for \(\Omega\). We will tackle the boundedness only later, out of convenience. Let us write the entropy inequality fully. 
\begin{equation}\label{eq:entropy3}
  \int_\Omega H_\eps(u_0)-H_\eps(v_\eps)-(u-v_\eps)\psi_\eps(v_\eps) \leq \frac{1}{2C}\int_\Omega u_0\norm{\nabla\psi_\eps(u_0)-\nabla\psi_\eps(v_\eps)}^2.
\end{equation}
By construction, \(\psi_\eps(v_\eps)=-V\). Furthermore, for a fixed positive \(u_0\in \mc C^\infty(\Omega)\), we may choose \(\eps_0>0\) so that \(\eps_0<u_0<1/\eps_0\), and equation \eqref{eq:entropy3} rewrites
\begin{equation}
  \int_\Omega H(u_0)-H_\eps(v_\eps)+(u-v_\eps)V \leq \frac{1}{2C} \int_\Omega u_0\norm{\nabla\psi(u_0)+\nabla V}^2.
\end{equation}
for any \(0<\eps<\eps_0\). We just need to pass to the limit to prove theorem \ref{th:gen_sob_cc}. By Fatou's lemma,
\begin{align*}
  \int_\Omega H(u_0)-\liminf_{\eps\to 0}H_\eps(v_\eps)+(u-v)V &\leq  \liminf_{\eps\to0}\pare{\int_\Omega H(u_0)-H_\eps(v_\eps)+(u-v_\eps)V}\\
  &\leq \frac{1}{2C} \int_\Omega u_0\norm{\nabla\psi(u_0)+\nabla V}^2,
\end{align*}
so it suffices to show that \(\liminf_{\eps\to0}H_\eps(v_\eps) \leq H(v)\). Let \(x\in\overline\Omega\), we are faced with two cases, since we assumed that \(v\) is finite everywhere.
\begin{itemize}
\item If \(-V(x)\in(\psi(0^+),\psi(+\infty))\), then \(0<v(x)<+\infty\) and, since \(H_\eps\) coincides with \(H\) on the interval \(\sbra{\eps,1/\eps}\), it is clear that \(\lim_{\eps\to0} H_\eps(v_\eps(x))=H(v(x))\).
\item If \(-V(x)\leq \psi(0^+)\), then \(v_\eps(x)\to 0\), and since \(H_\eps(0)=0\),
  \[
  H_\eps(v_\eps(x))=\int_0^{v_\eps(x)}\psi_\eps(t)dt \leq v_\eps(x)\psi_\eps(v_\eps(x))= -v_\eps(x) V(x),
  \]
  so \(\lim_{\eps\to0} H_\eps(v_\eps(x))\leq 0=H(v(x))\).
\end{itemize}
This concludes the proof of theorem \ref{th:gen_sob_cc}. Notice that while we proved it for \(\mc C^\infty\) functions, it makes sense for the function \(u_0=v\) even though it is not necessarily differentiable.
Indeed, on the interior of \(\supp(v)\), \(v\) is smooth and \(\nabla\psi(v)=-V\). On the other hand, on the interior of \(\Omega\backslash\supp(v)\), \(\psi(v)\) is still well defined, because \(v\) can only be zero when \(-V\leq \psi(0^+)\), which means that \(\psi(0^+)\in\R\), and we may conclude that  \(v\norm{\nabla\psi(v)+V}^2=0\) on that set. In this sense, inequality \eqref{eq:gen_sob_cc} is optimal, because both sides are equal to \(0\) when \(u_0=v\).

\begin{rmrk}
  In the particular case of \(H(x)=\frac{x^\alpha}{\alpha(\alpha-1)}\), we use generalized inverses only when \(\alpha>1\). It just so happens that in that case, the function \(U(x)=\frac{x^\alpha}{\alpha}\) is convex, and thus \(U_2\geq 0\). This implies that to get inequality \eqref{eq:entropy_ineq} and ultimately to the entropy inequality theorem~\ref{th:gen_sob}, we only need a \(CD(0,\infty)\) assumption, and not any more the stronger \(CD(0,d)\) assumption. This does not matter so much in our case because we are only considering \(\R^d\), but it might prove useful on manifolds.  
\end{rmrk}





\bibliographystyle{alpha}
\bibliography{biblio-entropy}

\end{document}